\newcommand{\ssmargin}[2]{{\color{blue}#1}{\marginpar{\color{blue}\raggedright\scriptsize [SS] #2 \par}}}
\newcommand{\thmargin}[2]{{\color{green}#1}{\marginpar{\color{green}\raggedright\scriptsize [TH] #2 \par}}}
\newcommand{\X}{\mathcal{X}}
\newcommand{\U}{\mathcal{U}}
\newcommand{\Pp}{\mathcal{P}}
\newcommand{\reals}{\mathbb{R}}
\newcommand{\Prob}{\mathcal{P}}
\newcommand{\argmin}{\operatornamewithlimits{\mathrm{argmin}}}
\newcommand{\ulb}{\underline{u}}
\newcommand{\uub}{\overline{u}}
\newcommand{\CR}{\mathcal{CR}}
\newcommand{\J}{\mathcal{J}}
\newcommand{\ol}{\texttt{\bf{OL}}}
\newcommand{\cl}{\texttt{\bf{CL}}}
\newcommand{\clg}{\texttt{\bf{CL}}_{\gamma}}
\theoremstyle{nospace} \newtheorem{theorem}{Theorem}
\theoremstyle{nospace} 
\theoremstyle{nospace} 
\theoremstyle{nospace} 
\theoremstyle{nospace} \newtheorem{remark}{Remark}
\theoremstyle{nospace} 
\theoremstyle{nospace} 
\theoremstyle{nospace} 
\theoremstyle{nospace} 
\newcommand{\linebreakand}{%
\end{@IEEEauthorhalign}
\hfill\mbox{}\par
\mbox{}\hfill\begin{@IEEEauthorhalign}
}
\title{\LARGE \bf Optimizing Trajectories with Closed-Loop Dynamic SQP}
\date{}
\author{Sumeet Singh$^{1}$, Jean-Jacques Slotine$^{1}$, Vikas Sindhwani$^{1}$% <-this % stops a space
    \thanks{$^{1}$ Sumeet Singh, Jean-Jacques Slotine (MIT), and Vikas Sindhwani are with Robotics at Google,
   	 New York City, NY 10011, USA
   	 {\tt\small \{ssumeet,jeanjacquess, sindhwani\}@google.com}}%
}
\begin{document}
    \maketitle
    \begin{abstract}
    Indirect trajectory optimization methods such as Differential Dynamic Programming (DDP) have found considerable success when only planning under dynamic feasibility constraints. Meanwhile, nonlinear programming (NLP) has been the state-of-the-art approach when faced with additional constraints (e.g., control bounds, obstacle avoidance). However, a na{\"i}ve implementation of NLP algorithms, e.g., shooting-based sequential quadratic programming (SQP), may suffer from slow convergence -- caused from natural instabilities of the underlying system manifesting as poor numerical stability within the optimization. Re-interpreting the DDP closed-loop rollout policy as a \emph{sensitivity-based correction to a second-order search direction}, we demonstrate how to compute analogous closed-loop policies (i.e., feedback gains) for \emph{constrained} problems. Our key theoretical result introduces a novel dynamic programming-based constraint-set recursion that augments the canonical ``cost-to-go" backward pass. On the algorithmic front, we develop a hybrid-SQP algorithm incorporating DDP-style closed-loop rollouts, enabled via efficient \emph{parallelized} computation of the feedback gains. Finally, we validate our theoretical and algorithmic contributions on a set of increasingly challenging benchmarks, demonstrating significant improvements in convergence speed over standard open-loop SQP.
    \end{abstract}

% \begin{IEEEkeywords}
% component, formatting, style, styling, insert
% \end{IEEEkeywords}

\section{Introduction}

Trajectory optimization forms the backbone of model-based optimal control with myriad applications in robot mobility and manipulation \cite{SchulmanEtAl2014,KalakrishnanChittaEtAl2011,SindhwaniRoelofs2017,HowellJackson2019,TassaErez2007}. The problem formulation is as follows: consider a robotic system with state $x \in \reals^n$, control input $u \in \reals^m$, subject to the discrete-time dynamics:
\begin{equation}
    x_{k+1} = f(x_k, u_k), \quad k \in \mathbb{N}_{\geq 0}.
\label{dyn}
\end{equation}
Let $N \in \mathbb{N}_{>0}$ be some fixed planning horizon. Given some initial state $x_0$, the trajectory optimization problem is as follows:
\begin{subequations}
\begin{align}
    \min_{\bm{u}, \bm{x}} \qquad &\sum_{k=0}^{N-1} l_k(x_k, u_k) + l_N(x_N) =: \J(\bm{u}, \bm{x})\\
    \mathrm{s.t.} \qquad & k=0,\ldots,N-1: \quad \begin{cases} x_{k+1} = f(x_k, u_k) \\
    c_k^u(u_k) \geq 0 \\ c_{k+1}^x(x_{k+1}) \geq 0 \end{cases},
\end{align}
\label{trajopt}
\end{subequations}
where we use $(\bm{u}, \bm{x})$ to denote the concatenations $(u_0, \ldots, u_{N-1})$ and $(x_0, \ldots, x_{N})$, respectively. Here, $l_k : \reals^n \times \reals^m \rightarrow \reals_{\geq 0}$ is the running cost, $l_N: \reals^n \rightarrow \reals_{\geq 0}$ is the terminal cost, and $c_k^x : \reals^n \rightarrow \reals^{n_x}, c_k^u: \reals^m \rightarrow \reals^{n_u}$ are the vector-valued constraint functions on the state and control input. We assume that the control constraint encodes simple box constraints: $\ulb \leq u_k \leq \uub$, though, the results in this paper may be generalized beyond this assumption.

Solution methods generally fall into one of two approaches: optimal control-based (indirect methods), or optimization-based (direct methods). The former leverages necessary conditions of optimality for optimal control, such as dynamic programming (DP), while the latter treats the problem as a pure mathematical optimization program~\cite{Bertsekas2016}. A further sub-categorization of the direct method distinguishes between a \emph{Full} or a \emph{Condensed} formulation, where the former treats both the states and controls as optimization variables, subject to dynamics equality constraints, while the latter optimizes only over the control variables, with the dynamics implicit.

\begin{figure}[t]
  \centering
  \includegraphics[width=0.5\textwidth]{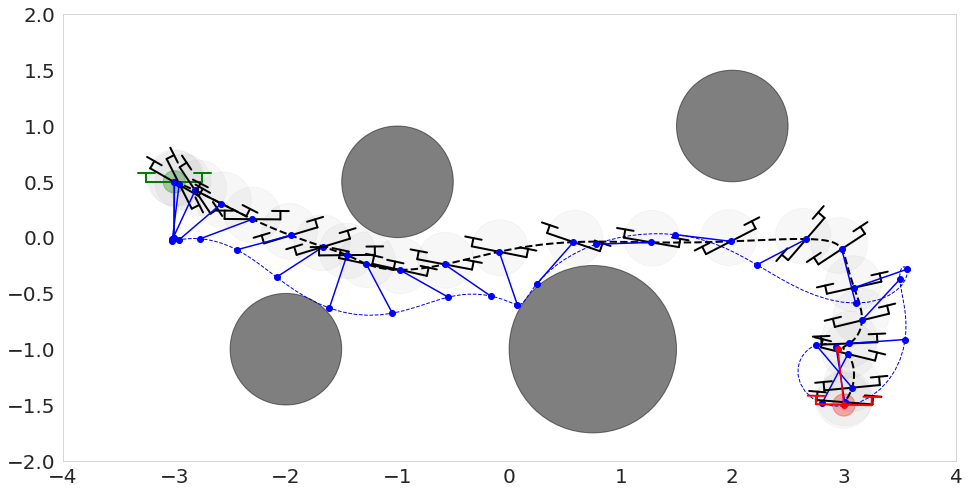}  
\caption{\footnotesize{{\it Motion planning using our methods for a planar quadrotor with attached pendulum, starting at rest with the pendulum in stable equilibrium (top-left, green), and ending at rest with the pendulum upright (bottom-right, red).}}}
\label{fig:quadpole2}\vspace{-0.5cm}
\end{figure}
Lacking constraints beyond dynamic feasibility, ubiquitous indirect methods~\cite{LiTodorov2004,Mayne1973,MurrayYakowitz1984,DunnBertsekas1989} such as Differential Dynamic Programming (DDP) and its Gauss-Newton relaxation, iterative Linear Quadratic Regulator (iLQR) rely upon the DP recursion to split the full-horizon planning problem into a sequence of one-step optimizations, and alternate between a backward and forward pass through the time-steps. The backward pass recursively forms quadratic expansions of the optimal cost-to-go function and computes a time-varying affine perturbation policy that is subsequently rolled out through the system's dynamics in the forward pass to yield the updated trajectory iterate. Under mild assumptions, DDP locally achieves quadratic convergence, and the proof relies upon establishing the close link between DDP and the Newton method, as applied to the \emph{condensed} optimization-based formulation~\cite{MurrayYakowitz1984}. The stability properties of the underlying nonlinear system  manifest as numerical stability during the optimization process, and hence the \emph{closed-loop} nature of the forward pass in DDP typically leads to better performance~\cite{LiaoSchoemaker1992} than Newton's method, which implements \emph{open-loop} rollouts.

\iffalse
\ssmargin{In particular}{can reduce this para to just say Newton bwd pass similar (not same) to \thmargin{DDP/ILQR}{not consistent with ordering later} bwd pass. however, it is the diff in forward passes that has the big effect.}, one can show that the Newton step for the condensed direct formulation can be computed as a byproduct of solving a time-varying LQR (TV-LQR) problem, defined using Taylor expansions about the current trajectory iterate\cite{DunnBertsekas1989}. By solving this TV-LQR problem using the Riccati recursion, one obtains a very similar set of backward pass equations to DDP (and iLQR), including a time-varying affine perturbation policy. However, the Newton method rolls out the affine perturbation policy through the locally linearized dynamics to compute a control perturbation sequence. This is then rolled out \emph{open-loop} on the system's non-linear dynamics to yield the updated trajectory iterate. In contrast, DDP/iLQR rolls out the time-varying affine perturbation policy directly through the non-linear dynamics. Despite small differences in the backward passes, it is the \emph{closed-loop} nature of the forward pass that leads to DDP generally outperforming Newton, despite both being categorized as second-order methods~\cite{LiaoSchoemaker1992}.
\fi
%The algorithmic efficiency of iLQR/DDP, in particular, the closed-loop rollout, has inspired extensive efforts within the \emph{constrained} setting, where there are constraints beyond dynamic feasibility (e.g., obstacle avoidance). 
Computing DDP-style  closed-loop updates within the \emph{constrained} setting (beyond dynamics feasibility) is much more challenging, since quadratization of the cost and linearization of the dynamics and constraints yields \emph{constrained} quadratic programs (QPs) with \emph{piecewise-affine} optimal perturbation policies~\cite{BorrelliBemporad2017}, and complexity growing exponentially in the number of constraints and horizon of the problem. Consequently, direct methods (featuring open-loop updates) are the prevailing solution approach, typically combined with interior-point or SQP algorithms. Aside from possessing more variables, the direct formulation must additionally resolve dynamic feasibility, which can be non-trivial and lead to slower convergence even for unconstrained systems, as compared to indirect methods. Moreover, while the condensed formulation yields smaller problems, instabilities have been observed~\cite{TennyWright2004} due to divergences between the predicted step from the linearized dynamics, and the open-loop nonlinear rollout, leading to vanishing step-sizes.

%In this work, we isolate the causes of instabilities in the standard SQP method, and propose \emph{closed-loop} enhancements that address these limitations. First, we incorporate the closed-loop stabilization technique introduced in~\cite{TennyWright2004} into the SQP line-search, and subsequently unify the idea with the notion of ``trajectory projections" introduced in~\cite{Hauser2002}. While the concept of ``projections" has been explored in other works, for instance in context of unconstrained optimization for continuous-time dynamics and barrier-based methods~\cite{AguiarBayer2017, BayerNotarstefano2013}, we provide some theoretical and practical insights into the consequences of leveraging such an operator within various stages of the SQP algorithm, and the resulting benefits. Second, we

\noindent {\bf Contributions:} Re-interpreting the canonical DDP closed-loop rollout as a \emph{sensitivity-based correction to a second-order search direction}, we demonstrate how to compute a locally affine approximation to the constrained perturbation policies, i.e., a set of feedback gains similar to those employed by the DDP rollouts. Our key theoretical result states that in the constrained setting, one must first compute an optimal perturbation sequence about the current trajectory iterate by solving a full-horizon QP (as opposed to the one-step DP backward recursion), and then augment the canonical cost-to-go backward pass with a constraint-set recursion. We then demonstrate how to approximate the desired feedback gains using an efficient, parallelized algorithm, eliminating the backward pass. The closed-loop rollout is integrated into an SQP line-search, yielding a hybrid indirect/direct algorithm that combines the theoretical foundations of SQP for constrained optimization with the algorithmic efficiencies of DDP-style forward rollouts. The method is rigorously evaluated within several environments, where we confirm significant convergence speed improvements over na{\"i}ve (i.e., open-loop) SQP.

\noindent {\bf Related Work:} Quasi-DDP methods for constrained trajectory optimization fall into one of three main categories: control-bounds only~\cite{Tassa2014},\cite{MartiSolaEtAl2020},  modified backward pass via KKT analysis~\cite{GiftthalerBuchli2017,AoyamaBoutselis2020,XieLiu2017,Yakowitz1986,LantoineRussell2012}, and augmented Lagrangian methods~\cite{PlancherManchester2017,HowellJackson2019,SindhwaniRoelofs2017,MaCheng2020,PavlovShames2021}. We provide a comprehensive overview of these approaches in Appendix~\ref{app:related}\footnote{All appendices referenced herein may be found in the online version of this work~\cite{SinghSlotine2021}.}.

\section{Shooting SQP}
\label{sec:shooting}

We detail below the core algorithmic steps for SQP, as applied to the shooting formulation of problem~\eqref{trajopt}, i.e., where dynamics are treated implicitly and we optimize only over the control sequence $\bm{u}$. The three steps are~\cite{GillMurray1986,GillMurray2005}: (i) solving a QP sub-problem to compute a search direction, i.e., a sequence of control perturbations $\bm{\delta u} = (\delta u_0, \ldots, \delta u_{N-1})$, (ii) performing line-search along $\bm{\delta u}$ using a merit function, and (iii) monitoring termination conditions. We provide some details regarding (i) and (ii) here, and refer the reader to Appendices~\ref{app:sqp} and~\ref{app:ocp} for the rest.

Let $c_k(x_k, u_k)$ denote the concatenation $(c_k^x(x_k), c_k^u(u_k))$, $k=0,\ldots, N$ (where $c_N^u$ is null), and let $y_k$ denote the corresponding dual variable. Consider the Lagrangian at current primal-dual iterate $(\bm{u}, \bm{y})$:
\begin{equation}
    \mathcal{L}(\bm{u}, \bm{x}, \bm{y}) =  \sum_{k=0}^{N-1} l_k - y_k^T c_k + l_N - y_N ^T c_N
\label{trajopt_Lag_implicit}
\end{equation}
For brevity, we omit the explicit arguments $(x_k, u_k)$ where possible. As we are only optimizing over $\bm{u}$, let $\bm{x}[\bm{u}]$ represent the state trajectory starting at $x_0$ obtained from propagating the open-loop control sequence $\bm{u}$ through the discrete-time dynamics in~\eqref{dyn} for $k=0,\ldots,N-1$. Define the ``reduced" objective and Lagrangian as $\J_R(\bm{u}) = \J(\bm{u}, \bm{x}[\bm{u}])$ and $\mathcal{L}_R(\bm{u}, \bm{y}) := \mathcal{L}(\bm{u}, \bm{x}[\bm{u}], \bm{y})$, respectively, and define the sets $\{\delta \X_k\}_{k=1}^{N}$ and $\{\delta \U_k\}_{k=0}^{N-1}$ as $\delta \X_k := \{\delta x : c_k^x + J_k^x \delta x \geq 0\}$ and $\delta \U_k := \{ \delta u : c_k^u + J_k^u \delta u \geq 0\}$. Then, the QP sub-problem takes the form:
\begin{subequations}
\begin{align}
        \min_{\bm{\delta u}, \bm{\delta x}} \quad &\langle\bm{\delta u}, \nabla_{\bm{u}} \J_R(\bm{u})\rangle + \dfrac{1}{2} \langle \bm{\delta u}, \nabla_{\bm{u}}^2 \mathcal{L}_R(\bm{u}, \bm{y}) \bm{\delta u} \rangle  \label{qp_obj_implicit}\\
        \mathrm{s.t.} \quad &\delta x_{k+1} = A_k \delta x_k + B_k \delta u_k,\  k = 0,\ldots, N-1 \label{lin_dyn} \\
        \quad & \delta u_k \in \delta\U_k,\quad \delta x_{k+1} \in \delta\X_{k+1} , \ k=0,\ldots,N-1 \label{con_lin},
\end{align}
\label{qp_prob_implicit}
\end{subequations}
where $\delta x_0 = 0$ and $\langle \cdot, \cdot \rangle$ denotes the standard Euclidean dot product; $(A_k, B_k)$ are the dynamics Jacobians $(\partial f/\partial x, \partial f/\partial u)(x_k, u_k)$, and $(J_k^u, J_k^x)$ are the constraint Jacobians $(\partial c_k^u(u_k)/\partial u, \partial c_k^x(x_k)/\partial x)$. The sub-problem objective~\eqref{qp_obj_implicit} has the form:
\begin{equation}
\begin{split}
    &\sum_{k=0}^{N-1} \overbrace{\left[q_k^T \delta x_k + r_k^T \delta u_k + \dfrac{1}{2} \begin{bmatrix} \delta x_k \\ \delta u_k \end{bmatrix}^T Z_k \begin{bmatrix} \delta x_k \\ \delta u_k \end{bmatrix}\right]}^{:=\tilde{l}_k(\delta x_k, \delta u_k)} + \\
    &\quad + \underbrace{\dfrac{1}{2} \delta x_N^T Z_N \delta x_N + q_N^T \delta x_N}_{:= \tilde{l}_N(\delta x_N)}
\end{split}
\label{qp_obj_explicit}
\end{equation}
where the terms $\{q_k, r_k, Z_k\}$ are provided in Appendix~\ref{app:ocp}. Let $(\bm{\delta u}^*, \bm{\delta x}^*)$ represent the optimal primal, and $\hat{\bm{y}}$ the optimal inequality dual solutions for~\eqref{qp_prob_implicit}, and define the dual search direction $\bm{\delta y}^* := \hat{\bm{y}} - \bm{y}$.

\subsection{Line-Search}\label{sec:line-implicit}

For $\alpha \in (0, 1]$, define $\bm{u}[\alpha] := \bm{u} + \alpha \bm{\delta u}^*$ and $\bm{x}[\bm{u}[\alpha]] := \bm{x}[\alpha]$. The line-search merit function is defined as:
\begin{equation}
    \phi(\alpha; \rho) = \mathcal{M}_I(\bm{u}[\alpha], \bm{y} + \alpha \bm{\delta y}^*, \bm{s} + \alpha \bm{\delta s^*}; \rho), \  \bm{x} = \bm{x}[\alpha],
\label{line-search}
\end{equation}
where $\mathcal{M}_I$ is the augmented Lagrangian function for problem~\eqref{trajopt}, $\bm{s}$ is a vector of slack variables for the inequality constraints with search direction $\bm{\delta s}^*$, introduced solely for the line-search, and $\rho = \{\rho_k\}_{k=0}^{N}$ is a set of penalty parameters. Please see Appendix~\ref{app:ocp} for details on $\{\bm{s}, \bm{\delta s}, \rho\}$.

\section{Dynamic Programming SQP}
\label{sec:dp-sqp}

Implicit within the line-search is the \emph{open-loop} rollout along the search direction $\bm{\delta u}^*$, i.e., $\bm{x}[\alpha] = \bm{x}[\bm{u} + \alpha \bm{\delta u}^*]$. For unstable nonlinear systems, this state trajectory may differ significantly from $\bm{x} + \bm{\delta x}^*$, the ``predicted" sequence from the QP sub-problem, forcing the line-search to take sub-optimal step-sizes and slowing convergence. This observation is corroborated in~\cite{LiaoSchoemaker1992} in context of comparing DDP and Newton methods for unconstrained problems, and within~\cite{TennyWright2004} in the constrained context. Our objective therefore, is to efficiently compute a set of feedback gains to perform DDP-style \emph{closed-loop} rollouts within the SQP line-search. We hypothesize that such an enhancement will (i) improve the numerical stability of the line-search, and (ii) accelerate convergence of Shooting SQP. We first demonstrate how the classical DP recursion is ill-posed in the context of constrained trajectory optimization, and propose a correction inspired from sensitivity analysis.

\subsection{Sensitivity-Based Dynamic Programming}
The starting point for the derivation of iLQR and DDP algorithms for unconstrained problems is with the Bellman form of the optimal cost-to-go function:
\[\small
    \begin{split}
    V_k(x) &:= \min_{\pi_k} \left[l_k(x, \pi_k(x)) + V_{k+1}(f(x, \pi_k(x))) \right],\  k =0,\ldots, N-1 \\ V_N(x) &:= l_N(x),
    \end{split}
\]
where $\pi_k: \reals^n \rightarrow \reals^m$ is a policy for time-step $k$, mapping states to controls. For a non-optimal state-control sequence $(\bm{x}, \bm{u})$, consider the local expansion of the optimal cost-to-go function:%
\begin{small}
\[
    \begin{split}
    &\delta V_k(\delta x_k) := V_k(x_k + \delta x_k) \\
    &\quad = \min_{\delta \pi_k} \left[\underbrace{l_k(x_k + \delta x_k, u_k + \delta \pi_k(\delta x_k)) + \delta V_{k+1}(\delta x_{k+1}) }_{:= Q_k(\delta x_k, \delta \pi_k)}\right],
    \end{split}
\]
\end{small}%
where $\delta x_{k+1} = f(x_k + \delta x_k, u_k + \delta \pi_k(\delta x_k)) - x_{k+1}$, and $\delta \pi_k: \reals^n \rightarrow \reals^m$ is a \emph{perturbation policy} for time-step $k$ at $x_k$, as a function of $\delta x_k$. Now, by recursively (in a backward pass) taking quadratic approximations of the state-action variation function $Q_k$ about $(\delta x_k, \delta u_k) = (0, 0)$, one can solve for an affine approximation to the minimizing perturbation policy. In particular, let $\breve{Q}_k$ represent the quadratic approximation of $Q_k$, and define $\delta \breve{\pi}_k^*(\delta x_k) := \argmin_{\delta u} \breve{Q}_k(\delta x_k, \delta u)$. Then $\delta \breve{\pi}_k^*(\delta x_k) = \overline{\delta u}_k + K_k \delta x_k$. For step-length $\alpha$, this perturbation policy is rolled out to obtain the new trajectory iterate:
\begin{equation}
    \begin{split}
    \delta x_{k+1}[\alpha] &= f(x_k + \delta x_k[\alpha], u_k + \delta u_k[\alpha]) - x_{k+1}, \\
    \delta u_k[\alpha] &= \alpha \overline{\delta u}_k + K_k \delta x_k[\alpha], \quad \alpha \in (0, 1],
    \end{split}
\label{ddp_roll}
\end{equation}
where $\delta x_0[\alpha] = 0$. Notice that one may interpret the terms of the \emph{unconstrained} perturbation policy $\delta \breve{\pi}_k^*$ as follows:
\begin{equation}
    \overline{\delta u}_k := \delta \breve{\pi}_k^*(0), \quad K_k := \dfrac{\partial \delta \breve{\pi}_k^*(0)}{\partial \delta x_k}.
\label{ddp_terms}
\end{equation}
\begin{remark}\label{rem:pert_uncon}
Since $\delta \breve{\pi}^*_k(\delta x_k)$ is the solution of an unconstrained convex quadratic, the argument $0$ is redundant for the sensitivity matrix $K_k$. This will not be the case in the constrained setting.
\end{remark}
Consider now the constrained setting, and define for $k \in \{0,\ldots, N-1\}$:
\begin{equation}\small
    \delta \pi_k^*(\delta x_k) := \argmin_{\delta u_k \in \delta \U_k} \quad \tilde{l}_k(\delta x_k, \delta u_k) + \delta V_{k+1}(\delta x_{k+1})
\label{pi_pert}
\end{equation}
where $\tilde{l}_k$ is the stage-$k$ term in~\eqref{qp_obj_explicit}, $\delta x_{k+1} = A_k \delta x_k + B_k \delta u_k$, and $\delta V_{k+1}$ is the optimal ``cost-to-go" for problem~\eqref{qp_prob_implicit}. That is, for $k+1 = N$, $\delta x_N \in \delta \X_N$, $\delta V_N(\delta x_N) = \tilde{l}_N(\delta x_N)$, while for $k+1 \in \{1, \ldots, N-1\}$, $\delta x_{k+1} \in \delta \X_{k+1}$, $\delta V_{k+1}(\delta x_{k+1})$ is the optimal value of the \emph{tail-truncation} of QP sub-problem~\eqref{qp_prob_implicit}, starting at time-step $k+1$ at $\delta x_{k+1}$.

 Notice that since $\delta V_{k+1}$ is the optimal value of a \emph{constrained} QP, $\delta \pi_k^*(0)$ and the sensitivity matrix $\partial \delta \pi_k^*(0)/\partial \delta x_k$ (paralleling the terms defined in~\eqref{ddp_terms}) may be \emph{ill-defined}, for instance when the tail sub-problem is infeasible at $\delta x_k = 0$. This is a consequence of the linearized constraints, irrespective of the objective function used to define the DP recursion.
 
 Instead, consider the following equivalent re-arrangement of the \emph{unconstrained} DDP control law in~\eqref{ddp_roll}:
\begin{equation}\small
    \begin{split}
    \delta u_k[\alpha] &= \alpha \overline{\delta u}_k + K_k \delta x_k[\alpha] \\
        &= \alpha \overline{\delta u}_k + K_k \alpha \delta x_k^L + K_k (\delta x_k[\alpha] - \alpha \delta x_k^L) \\
        &= \alpha \delta u_k^L + K_k (\delta x_k[\alpha] - \alpha \delta x_k^L)
    \end{split}
\label{ddp_alt}
\end{equation}
where, the sequence $(\bm{\delta x}^L, \bm{\delta u}^L)$ is defined by the rollout of $\delta \breve{\pi}^*_k$ via the linearized dynamics:
\begin{equation}
    \begin{split}
    \delta x_{k+1}^L &= A_k \delta x_k^L + B_k \delta u_k^L, \quad \delta x_0^L = 0 \\
    \delta u_{k}^L &:= \delta \breve{\pi}^*_k(\delta x_k^L).
    \end{split}
\label{lin_rollout}
\end{equation}
In light of the homogeneity of the above recursion (i.e., the sequence $\alpha \bm{\delta u}^L$ rolled out via the linear dynamics yields $\alpha \bm{\delta x}^L$), eq.~\eqref{ddp_alt} suggests interpreting $\bm{\delta u}^L$ as a \emph{search-direction}, $\alpha \bm{\delta u}^L$ as the search step, and the feedback term as a sensitivity-based correction. Thus, we may interpret the DDP rollout as a \emph{local sensitivity-based correction to the Newton search direction ($\bm{\delta u}^L$)}. Generalizing this interpretation to the constrained setting, consider the following control law:
\begin{small}
\begin{equation*}
    \delta u_k[\alpha] =\mathrm{clip}_{\ulb - u_k}^{\uub-u_k}\bigg[\alpha \delta \pi_k^*(\delta x_k^L) + \dfrac{\partial \delta \pi_k^*(\alpha \delta x_k^L)}{\partial \delta x_k} (\delta x_k[\alpha] - \alpha \delta x_k^L)\bigg],
\end{equation*}
\end{small}
where similarly to~\eqref{lin_rollout}, $\bm{\delta x}^L$ is obtained from rolling-out $\{\delta \pi_k^*(\delta x_k^L)\}_{k=0}^{N-1}$ through the linearized dynamics. 
Now, it follows from Bellman's principle of optimality that $\delta \pi_k^*(\delta x_k^*) = \delta u_k^*$, where $(\bm{\delta x}, \bm{\delta u})^*$ are the optimal solution to~\eqref{qp_prob_implicit}. Further since $\delta x_0^L = \delta x_0^* = 0$, it follows inductively that $\bm{\delta x}^L = \bm{\delta x}^*$. Thus, the final control law for $\delta u_k[\alpha]$ becomes
\begin{equation}
    \mathrm{clip}_{\ulb - u_k}^{\uub-u_k}\bigg[\alpha \delta u_k^*+
    \dfrac{\partial \delta \pi^*_k(\alpha\delta x_k^*)}{\partial \delta x_k} (\delta x_k[\alpha] - \alpha\delta x_k^*)\bigg].
\label{ddp_ideal}
\end{equation}
Given that $\delta \pi^*_k$, as defined in~\eqref{pi_pert}, is implicitly the solution of a \emph{variable-horizon} optimization, it is computationally prohibitive to compute the sensitivity matrices above via explicit differentiation. Instead, we next define a DP recursion to exactly compute these sensitivities about the \emph{fixed} sequence $\bm{\delta x}^*$.

\begin{remark}
Since problem~\eqref{pi_pert} is a multi-parametric QP in $\delta x_k$, $\delta \pi^*_k$ is a piecewise-affine function of $\delta x_k$~\cite{BorrelliBemporad2017}. Thus, for $\alpha=1$, the expression inside the brackets in~\eqref{ddp_ideal} lies within $\delta \U_k$ only for $\delta x_k[1]$ in a local region around $\delta x_k^*$, thereby necessitating the clipping operation (i.e., projection onto $\delta \U_k$).
\end{remark}

\subsection{DP Recursion for Computing Sensitivity Gains}
\label{sec:sens-exact}

We outline the DP recursion first and characterize its correctness in Theorem~\ref{thm:sens-gains}.
\medskip

\noindent {\bf Initialization}: Set $P_N = Z_N, p_N = q_N, v_N = 0, G^{\mathrm{cr}}_N = -J_N^x, h^{\mathrm{cr}}_N = c_N^x$.
\medskip

\noindent {\bf Time-step $k = N-1, \ldots, 0$}: Define the function $\delta \tilde{V}_{k+1}(\delta x_{k+1}):= (1/2)\delta x_{k+1}^T P_{k+1} \delta x_{k+1} + p_{k+1}^T \delta x_{k+1} + v_{k+1}$. For $\delta x_k \in \delta \X_k$, consider the one-step QP:
\begin{equation} \small
    \begin{split}
        \min_{\substack{\delta u_{k} \\ \delta x_{k+1}}} \quad &\tilde{l}_{k}(\delta x_k, \delta u_k) + \delta \tilde{V}_{k+1}(\delta x_{k+1}) \\
        \mathrm{s.t.} \quad &\delta x_{k+1} = A_k \delta x_{k} + B_{k} \delta u_{k} \\
        & \underbrace{\begin{bmatrix} -J_k^u & 0 \\ 0 & G^{\mathrm{cr}}_{k+1} \end{bmatrix}}_{:=\bar{G}_k} \begin{bmatrix} \delta u_{k} \\ \delta x_{k+1} \end{bmatrix} \leq \begin{bmatrix} c^u_{k} \\ h^{cr}_{k+1} \end{bmatrix} := \bar{h}_{k}.
    \end{split}
\label{prob_k}
\end{equation}
Let $\delta \hat{\pi}^*_{k}(\delta x_k)$ and $\hat{y}^*_k(\delta x_k)$ denote the optimal control perturbation and inequality dual solutions for the above one-step QP, as a function of $\delta x_k$. Define the sensitivity matrices $K_k^u$ and $K_k^y$ as the Jacobians ~\cite{AgrawalAmos2019} of $\delta \hat{\pi}^*_{k}$ and $\hat{y}^*_k$ respectively, evaluated at $\delta x_k^*$,  and define the affine functions:
\begin{subequations}
\begin{align}
\delta \hat{\pi}^*_{k,a}(\delta x_{k}) &= \delta \hat{\pi}^*_{k}(\delta x_{k}^*) + K_{k}^u (\delta x_{k} - \delta x_{k}^*) \\
\hat{y}^*_{k,a}(\delta x_{k}) &= \hat{y}^*_{k}(\delta x_{k}^*) + K_{k}^y (\delta x_{k} - \delta x_{k}^*).
\end{align}
\label{sens_aff}
\end{subequations}

\noindent{\bf Recurse}: Compute:
\begin{equation}\small
    G^{\mathrm{cr}}_{k}:=  \begin{bmatrix}  -J_{k}^{x} \\ \bar{G}_{k} \begin{bmatrix} K^u_{k} \\ \bar{A}_k \end{bmatrix} \\ -K_{k}^y \end{bmatrix}, \  h^{\mathrm{cr}}_{k} = \begin{bmatrix} c_k^x \\ \bar{h}_{k} - \bar{G}_{k} \begin{bmatrix} I \\ B_{k} \end{bmatrix} \delta\hat{\pi}^*_{k,a}(0) \\
    \hat{y}^*_{k,a}(0) \end{bmatrix},
\label{ctg_cons}
\end{equation}
where $\bar{A}_k := A_k + B_k K_k^u$, and:
\begin{subequations}\small
\begin{align}
    \bar{r}_k &= r_k + B_k^T p_{k+1} \quad \bar{R}_k = Z_{k, uu} + B_{k}^T P_{k+1} B_{k} \\
    \bar{M}_{k} &= Z_{k, xu} + A_{k}^T P_{k+1} B_{k} \\
    p_{k} &= q_k + A_{k}^T p_{k+1} + K_{k}^{u^T} \bar{r}_{k} + (K_{k}^{u^T} \bar{R}_{k}+ \bar{M}_{k}) \delta\hat{\pi}^*_{k,a}(0) \\
    P_{k} &= Z_{k, xx} + A_{k}^T P_{k+1} A_{k} + K_{k}^{u^T} \bar{R}_{k} K_{k}^{u^T} + \nonumber \\
    &\quad + \bar{M}_{k} K_{k}^{u} + K_{k}^{u^T} \bar{M}_{k}^T \\
    v_k &= \bar{r}_{k}^T \delta\hat{\pi}^*_{k,a}(0) + \dfrac{1}{2} \delta\hat{\pi}^*_{k,a}(0)^T \bar{R}_{k} \delta\hat{\pi}^*_{k,a}(0) + v_{k+1}.
\end{align}
\label{ctg_recurse}
\end{subequations}
We now characterize the correctness of this DP recursion in the following theorem; the proof is provided in Appendix~\ref{app:proof}.

\begin{theorem}\label{thm:sens-gains}
Suppose that for each $k \in \{N-1,\ldots, 0\}$, the solution of the one-step QP in~\eqref{prob_k} at $\delta x_k = \delta x_k^*$ satisfies Linear Independent Constraint Qualification (LICQ). Then, the recursion in~\eqref{ctg_cons}--\eqref{ctg_recurse} is well-defined. Define the sets $\CR_k: \{\delta x_k: G^{\mathrm{cr}}_k \delta x_k \leq h^{\mathrm{cr}}_k\}$, for $k=N,\ldots, 0$. It holds that:
\[
    \begin{split}
    &\delta x_k^* \in \CR_k \ k=N,\ldots, 0 \ \ \text{and} \\
    &\begin{cases} \forall \delta x_k \in \CR_k &: \  \delta \pi^*_k(\delta x_k) = \delta \hat{\pi}^*_{k,a}(\delta x_k) \\
                   \forall \delta x_{k+1} \in \CR_{k+1} &: \  \delta V_{k+1}(\delta x_{k+1}) = \delta \tilde{V}_{k+1}(\delta x_{k+1}),
     \end{cases}
     \end{split}
\]
for $k=N-1,\ldots, 0$.
\end{theorem}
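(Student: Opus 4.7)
The plan is a backward induction on $k \in \{N, N-1, \ldots, 0\}$, with the full statement at step $k+1$ serving as the inductive hypothesis. The base case $k = N$ is immediate from the initialization: $\delta V_N = \tilde{l}_N$ is already quadratic and equals $\delta\tilde{V}_N$ under $(P_N, p_N, v_N) = (Z_N, q_N, 0)$; $\delta\X_N = \CR_N$ via $G_N^{\mathrm{cr}} = -J_N^x$ and $h_N^{\mathrm{cr}} = c_N^x$; and $\delta x_N^* \in \delta\X_N$ trivially from feasibility of $\bm{\delta x}^*$.

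For the inductive step, the first move is to recognize the one-step QP~\eqref{prob_k} as a local surrogate for~\eqref{pi_pert}: invoking the inductive hypothesis, $\delta V_{k+1}$ in the Bellman operator is replaced by the quadratic $\delta\tilde{V}_{k+1}$, and the restriction $\delta x_{k+1} \in \CR_{k+1}$ is embedded into the $\bar{G}_k\,[\delta u_k;\delta x_{k+1}] \leq \bar{h}_k$ constraint block. Under the hypothesized LICQ and positive semi-definiteness of $P_{k+1}$ (which itself propagates through~\eqref{ctg_recurse}), \eqref{prob_k} is a convex QP in a neighborhood of $\delta x_k = \delta x_k^*$ with a unique KKT pair, so classical multiparametric QP sensitivity theory yields well-defined Jacobians $K_k^u, K_k^y$ and the affine expressions~\eqref{sens_aff}.

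The next step is to identify $\CR_k$ as the critical region on which this affine parameterization remains globally optimal for~\eqref{prob_k}. By the standard polyhedral characterization, this region is the intersection of: (i) primal feasibility of the closed-loop solution $\delta u_k = \delta\hat\pi^*_{k,a}(\delta x_k),\ \delta x_{k+1} = \bar{A}_k \delta x_k + B_k\,\delta\hat\pi^*_{k,a}(0)$, which reduces to $\bar{G}_k\,[K_k^u;\bar{A}_k]\,\delta x_k \le \bar{h}_k - \bar{G}_k\,[I;B_k]\,\delta\hat\pi^*_{k,a}(0)$; (ii) dual non-negativity $\hat{y}^*_{k,a}(\delta x_k) \ge 0$, i.e., $-K_k^y \delta x_k \le \hat{y}^*_{k,a}(0)$; and (iii) parametric state-feasibility $\delta x_k \in \delta\X_k$, i.e., $-J_k^x \delta x_k \le c_k^x$. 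These blocks assemble exactly into $(G_k^{\mathrm{cr}}, h_k^{\mathrm{cr}})$ of~\eqref{ctg_cons}. Membership $\delta x_k^* \in \CR_k$ then follows from Bellman's principle $\delta\hat\pi^*_k(\delta x_k^*) = \delta u_k^*$ combined with the inductive $\delta x_{k+1}^* \in \CR_{k+1}$ (second block), KKT dual feasibility of~\eqref{qp_prob_implicit} at its optimum (third block), and primal feasibility of $\bm{\delta x}^*$ at stage $k$ (first block).

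Finally, I would show that on $\CR_k$ the affine policy coincides with $\delta\pi_k^*$ and the induced quadratic recovers $\delta V_k$. For any $\delta x_k \in \CR_k$, the affine primal-dual pair is KKT-optimal for~\eqref{prob_k}, and its induced $\delta x_{k+1}$ lies in $\CR_{k+1}$, so by the inductive hypothesis the tail cost matches $\delta\tilde{V}_{k+1}(\delta x_{k+1})$; convexity of the tail sub-problem then promotes this local KKT certificate to global optimality for~\eqref{pi_pert}, giving $\delta\pi_k^*(\delta x_k) = \delta\hat\pi^*_{k,a}(\delta x_k)$. Substituting this affine policy into $\tilde{l}_k + \delta\tilde{V}_{k+1}$ and collecting quadratic, linear, and constant terms in $\delta x_k$ yields exactly the $(P_k, p_k, v_k)$ updates in~\eqref{ctg_recurse}. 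I expect this promotion step to be the main obstacle: \eqref{prob_k} restricts $\delta x_{k+1}$ to $\CR_{k+1}$, a strict subset of the tail-problem's true feasibility, so the argument leans on global convexity of $\delta V_{k+1}$ (as the min-value of a convex QP) together with KKT sufficiency to certify that the locally KKT-optimal point from~\eqref{prob_k} is in fact optimal for the unrestricted~\eqref{pi_pert}.
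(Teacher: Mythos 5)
Your overall architecture matches the paper's: backward induction, multiparametric-QP sensitivity theory to justify the affine forms in~\eqref{sens_aff}, assembly of $\CR_k$ from the state-feasibility, closed-loop primal-feasibility, and dual-nonnegativity blocks, and substitution of the affine law to obtain~\eqref{ctg_recurse}. However, there is a genuine gap at exactly the step you flag as ``the main obstacle,'' and your proposed resolution does not close it. The one-step QP~\eqref{prob_k} differs from the true Bellman problem~\eqref{pi_pert} in \emph{both} the objective (the quadratic surrogate $\delta \tilde{V}_{k+1}$ agrees with $\delta V_{k+1}$ only on $\CR_{k+1}$) \emph{and} the feasible set (it imposes the artificial restriction $\delta x_{k+1} \in \CR_{k+1}$, whose facets include back-propagated dual-feasibility conditions that are not constraints of the tail problem). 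A KKT certificate for~\eqref{prob_k} therefore does not transfer to~\eqref{pi_pert}: if any of the artificial $\CR_{k+1}$-constraints is active with a positive multiplier, the point need not be stationary for the unrestricted problem, and convexity of $\delta V_{k+1}$ alone cannot rule out that the true optimizer of~\eqref{pi_pert} sends $\delta x_{k+1}$ outside $\CR_{k+1}$ and attains a strictly lower value. Your inequality chain only shows that the optimal value of~\eqref{prob_k} is an \emph{upper} bound on that of~\eqref{pi_pert}, which is the wrong direction for a certificate.

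The paper closes this gap with a set-containment argument that has no analogue in your proposal. It defines $\mathrm{Pre}^*(\CR_{k+1})$, the set of $\delta x_k \in \delta\X_k$ for which the \emph{true} optimal policy already maps into $\CR_{k+1}$; on that set the added constraint $\delta x_{k+1} \in \CR_{k+1}$ is redundant, so $\delta\hat{\pi}^*_k = \delta\pi^*_k$ there by the inductive hypothesis. It then proves $\CR_k \subseteq \mathrm{Pre}^*(\CR_{k+1})$ via a boundary argument: by continuity of $\delta\pi_k^*$, any $\delta x_k \in \CR_k \cap \partial\,\mathrm{Pre}^*(\CR_{k+1})$ has its closed-loop successor on $\partial\CR_{k+1}$, which forces $\delta x_k \in \partial\CR_k$, so $\CR_k$ (which meets $\mathrm{Pre}^*(\CR_{k+1})$ at $\delta x_k^*$) cannot escape it. To repair your proof you would need either this containment argument or a proof that $\delta\tilde{V}_{k+1}$ globally underestimates $\delta V_{k+1}$ on the tail-feasible set together with a matching lower bound on the restricted minimum; as written, the ``promotion'' step is asserted rather than proved. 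The remaining minor issue is that you assume positive semidefiniteness of $P_{k+1}$ propagates through~\eqref{ctg_recurse} without argument, though this is tangential to the main claim.
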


\begin{remark}
A notable consequence of Theorem~\ref{thm:sens-gains} is that the canonical cost-to-go recursion is ill-posed in the presence of constraints. One must back-propagate both the cost-to-go terms and a set of constraints (i.e., the sets $\{\CR_k\}$) that define the regions where the quadratic models of the cost-to-go functions are precise.
\end{remark}

Despite the exactness of the DP recursion, there are some computational drawbacks. First, one must solve both the ``full-horizon" QP defined in~\eqref{qp_prob_implicit} and the one-step QPs defined in~\eqref{prob_k}, serially. Second, back-propagating sets $\{\CR_k\}$ is not numerically robust, particularly if the sensitivity $K_k^y$ is ill-defined. This occurs when the LICQ condition fails and the resulting matrix solve computation for the sensitivities is singular. Thus, in the next section, we outline a parallelized and tuneable approximation to the sensitivity gains, derived from the viewpoint of interior point methods.

\subsection{Approximating the Sensitivity Gains}
\label{sec:sens-approx}

For $k=N-1,\ldots, 0$, define problem $\Pp_{k\multimap}(\delta x)$ as the tail portion of QP sub-problem~\eqref{qp_prob_implicit}, starting at time-step $k$ at $\delta x$. Let $\bm{\delta u}^*_{k\multimap}(\delta x)$ represent the optimal solution as a function of $\delta x$, i.e., the optimal control perturbation sequence starting at time-step $k$. Notice that $\delta \pi_k^*(\delta x)$, as defined in~\eqref{pi_pert}, corresponds to the first element of the sequence $\bm{\delta u}^*_{k\multimap}(\delta x)$.

Now define the QP problem $\Pp_k(\delta x)$ as QP sub-problem~\eqref{qp_prob_implicit}, subject to an \emph{additional equality constraint}: $\delta x_k = \delta x$, and let $\bm{\delta u}^*_{k:}(\delta x)$ represent the optimal tail control perturbation sequence starting at time-step $k$.

Notice then that for all $\delta x$ where $\Pp_k(\delta x)$ is feasible, we have that $\bm{\delta u}^*_{k\multimap}(\delta x) = \bm{\delta u}^*_{k:}(\delta x)$. Thus, $\delta \pi_k^*(\delta x)$ is equal to the optimal control perturbation at time-step $k$ for problem $\Pp_k(\delta x)$, hereby denoted as the function $\delta \hat{\pi}_k^*(\delta x)$. Further, since $\Pp_k(\delta x_k^*)$ is feasible, one may compute the desired sensitivity gains $K_k^u$ as the Jacobian $\partial \delta \hat{\pi}_k^*(\delta x_k^*)/\partial \delta x$.

Note the distinction: the sensitivity $\partial \delta \hat{\pi}_k^*/\partial \delta x$ corresponds to the Jacobian of the solution of a \emph{fixed-horizon} QP (problem $\Pp_k(\delta x)$) w.r.t. a parameter ($\delta x$) that defines the equality constraint $\delta x_k = \delta x$. In comparison, the sensitivity $\partial \delta \pi_k^*/\partial \delta x$ corresponds to the Jacobian of the solution of a \emph{variable-horizon} QP (problem $\Pp_{k\multimap}(\delta x)$) w.r.t. a parameter ($\delta x$) that defines the ``initial condition." The former computation is easily parallelized.

Leveraging a recent result in~\cite{JinMou2021}, we approximate $\partial \delta \hat{\pi}_k^*(\delta x_k^*)/\partial \delta x$ by the Jacobian of the solution of the following unconstrained barrier re-formulation of problem $\Pp_k(\delta x)$ w.r.t. $\delta x$ at $\delta x = \delta x_k^*$:
\begin{equation}
\begin{split}\small
    \min_{\bm{\delta u}, \bm{\delta x}} \  &\sum_{j=0}^{N-1} \bigg[\tilde{l}_j - \gamma \bm{1}^T \log(c_j^x + J_j^x \delta x_j) - \gamma\bm{1}^T\log(c_j^u + J_j^u \delta u_j)\bigg] \nonumber \\
    &\  + \dfrac{1}{2 \gamma} \|\delta x_k - \delta x\|^2 + \tilde{l}_N - \gamma \bm{1}^T \log(c_N^x + J_N^x \delta x_N),
\end{split}
\label{barrier_prob}
\end{equation}
subject to the linear dynamics in~\eqref{lin_dyn}; where $\gamma > 0$ is the barrier constant. Denote $K_k^u(\gamma)$ to be the barrier-based Jacobian with parameter $\gamma$ and let $K_k^u$ be the true Jacobian. Under appropriate conditions on the solution of QP sub-problem~\eqref{qp_prob_implicit}, $K_k^u(\gamma) \rightarrow K_k^u$ as $\gamma \rightarrow 0$~\cite{JinMou2021}. 

Practically, we compute the Jacobians $K_k^u(\gamma)$ efficiently using iLQR and a straightforward application of the Implicit Function Theorem~\cite{AmosJimenez2018}. We initialized the solver with the QP sub-problem~\eqref{qp_prob_implicit} solution $\bm{\delta u}^*$, and found only a handful of iterations were needed to converge, particularly since problem~\eqref{barrier_prob} is convex.

\subsection{Hybrid SQP Algorithm}

We formally state the Hybrid SQP algorithm as a line-search modification of Shooting SQP, introduced in Section~\ref{sec:shooting}. Thus, at the current primal-dual iterate $(\bm{u}, \bm{x}[\bm{u}],\bm{y})$:
\medskip

\noindent \emph{Step 1}: Solve QP-subproblem~\eqref{qp_prob_implicit} to obtain the optimal perturbation sequence pair $(\bm{\delta u}^*, \bm{\delta x}^*)$.
\medskip

\noindent \emph{Step 2}: Compute the sensitivity gains $\{K_k\}_{k=0}^{N-1}$, using either the DP recursion in Section~\ref{sec:sens-exact} (i.e., $K_k = K_k^u$), or the smoothed approximation method in Section~\ref{sec:sens-approx} (i.e., $K_k = K_k^u(\gamma)$ for some $\gamma >0$).
\medskip

\noindent \emph{Step 3}: Perform line-search using~\eqref{line-search}, where $\bm{x}[\alpha] := \bm{x} + \bm{\delta x}[\alpha]$ is now defined by the \emph{closed-loop} rollout:
\[
\begin{split}
    \delta u_k[\alpha] :=&\mathrm{clip}_{\ulb - u_k}^{\uub - u_k}\left[\alpha \delta u_k^* + K_k (\delta x_k[\alpha] - \alpha \delta x_k^*)\right] \\
    \delta x_{k+1}[\alpha] &= f(x_k + \delta x_k[\alpha], u_k + \delta u_k[\alpha]) - x_{k+1}, \ \delta x_0[\alpha] = 0.
\end{split}
\]
Notice that if $\alpha=1$, the rollout corresponds with the ideal DDP rollout in~\eqref{ddp_ideal}, while for $\alpha < 1$, we end up with an approximation\footnote{As the sensitivity gains are only valid in a neighborhood of $\bm{\delta x}^*$, it is possible (though rare in our experiments) that the computed step-length $\alpha$ falls below the user-set threshold $\underline{\alpha}$ for a specific iteration. As a backup, we compute a set of TV-LQR gains $\{K_k^{\mathrm{lqr}}\}$ using the linearized dynamics and the Hessian of the objective function $\mathcal{J}(\bm{u}, \bm{x})$, and perform the closed-loop rollout with these gains. This strategy is motivated by the goal of tracking the perturbation $\alpha \bm{\delta x}^*$ during the rollout~\cite{TennyWright2004}.} stemming from using a fixed gain matrix.

\section{Experiments} \label{sec:exp}

We compare the na{\"i}ve, open-loop, Shooting SQP implementation introduced in Section~\ref{sec:shooting} (referred to as $\ol$) with the DDP-style closed-loop variation developed in Section~\ref{sec:dp-sqp} (referred to as $\cl$ and $\clg$) on two environments. The identifiers $\cl$ and $\clg$ distinguish between the exact DP recursion and the smoothed barrier-based approximation for computing the forward rollout gains. Please see Appendix~\ref{app:numerics} for details regarding problem setup, SQP hyperparameters, additional plots, and an extra worked example.

\subsection{Motion Planning for a Car}
The first example is taken from~\cite{AoyamaBoutselis2020}, featuring a 2D car ($n=4$, $m=2$) moving within the obstacle-ridden environment shown in Figure~\ref{fig:car-solns}. The objective is to drive to the goal position $(3, 3)$ with final velocity $0$ and orientation aligned with the horizontal axis in $N=40$ steps, while avoiding the obstacles. 

Figure~\ref{fig:car-solns} shows the computed trajectories by the three methods for three different initial conditions, while Table~\ref{tab:car-stats} provides the solver statistics. Notice that the $\ol$ method fails to converge within 100 iterations (the limit) for two of the three cases. In contrast, both closed-loop variations are quickly able to converge to stationary solutions.
\begin{table}[h]
\centering
\resizebox{0.5\textwidth}{!}{\begin{tabular}{l | c | c c c c c}
\toprule
Method & Case & Converged & Iter & Obj & Viol & Time/it [s] \\
\midrule
\multirow{3}{*}{$\ol$} 
& 1 & \ding{55} & 100 & 278.53 & -0.0346 & 0.34 \\
& 2 & \ding{55} & 100 & 2.17 & 0.0061 & 0.30 \\
& 3 & \checkmark & 12 & 21.49 & 3.25e-6 & 0.30 \\
\midrule
\multirow{3}{*}{$\cl$}
& 1 & \checkmark & 19 & 3.19 & 7.87e-6 & 13.17 \\
& 2 & \checkmark & 19 & 6.98 & 1.43e-6 & 19.19 \\
& 3 & \checkmark & 13 & 21.58 & 1.12e-5 & 17.90 \\
\midrule
\multirow{3}{*}{$\clg$}
& 1 & \checkmark & 19 & 3.19 & 1.34e-5 & 0.40 \\
& 2 & \checkmark & 16 & 2.06 & 1.28e-5 & 0.41 \\
& 3 & \checkmark & 11 & 21.58 & -4.67e-6 & 0.38 \\
\bottomrule
\end{tabular}}
\caption{\footnotesize{Solver statistics for the car planning example. \emph{Baseline:} $\ol$, \emph{Ours}: $\{\cl, \clg\}$. \emph{Iter}, \emph{Obj}, and \emph{Viol} report the number of iterations, objective, and max state constraint violation (negative value indicates infeasibility), respectively, for the final solution. \emph{Time/it} reports the average (over the course of the optimization) computation time per SQP iteration.}}
\label{tab:car-stats}\vspace{-0.5cm}
\end{table}

\begin{figure}[h]
  \centering
  \includegraphics[width=0.35\textwidth]{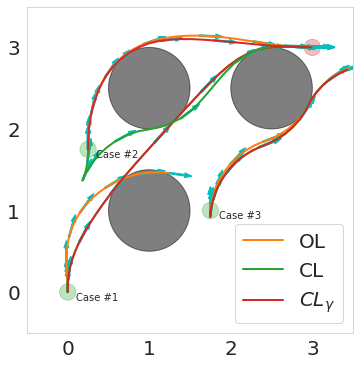}  
\caption{\footnotesize{Solution trajectories for all solvers on the car planning example. Start/end locations shaded green/red.}}
\label{fig:car-solns}\vspace{-0.5cm}
\end{figure}

\begin{figure}[h]
  \centering
  \includegraphics[width=0.5\textwidth]{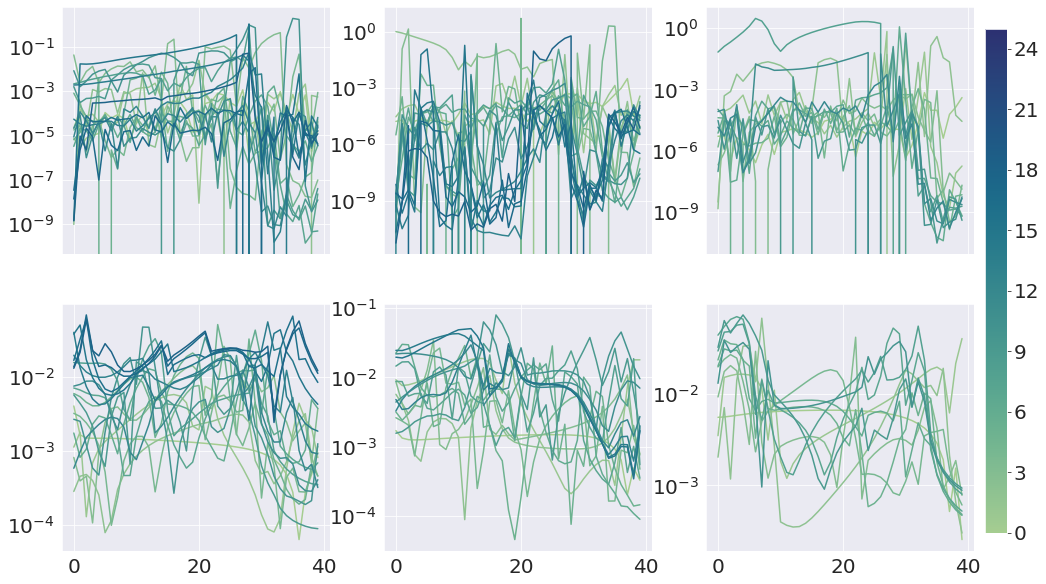}  
\caption{\footnotesize{Re-construction error between open-loop solution $\bm{\delta u}^*$ and perturbation policy solution $\delta \hat{\pi}_k^*(\delta x_k^*)$ as a function of time-step across all SQP iterations. \emph{Top}: $\cl$; \emph{Bottom}: $\clg$, $\gamma=10^{-4}$; \emph{Left-to-right}: Case index. Colorbar identifies the SQP iteration.}}
\label{fig:car-errs}\vspace{-0.7cm}
\end{figure}
In Figure~\ref{fig:car-errs}, we plot the re-construction error $\|\delta \hat{\pi}_k^*(\delta x_k^*) - \delta u_k^*\|$ for $\cl$ and $\clg$ over the course of the SQP iterations. Observe that for most iterations, the error is negligible for $\cl$, with occasional spikes resulting from the numerical instability of the \emph{constrained} DP recursion. In contrast, the error remains sufficiently low for \emph{all} iterations of $\clg$, even leading to a better quality (lower objective) solution for Case \#2. We hypothesize that the better numerical stability of $\clg$ stems from a \emph{smoothing} of the computed Jacobians (i.e., feedback gains), courtesy of the unconstrained barrier re-formulation. Finally, a notable advantage of $\clg$ over $\cl$ is the computation time. While $\cl$ involves differentiating through the KKT conditions of one-step horizon QPs, this computation must happen serially in the backward pass. In contrast, $\clg$ computes the required Jacobians across all time-steps in parallel using an efficient adjoint recursion associated with problem~\eqref{barrier_prob}. Consequently, the computation times-\emph{per iteration} are much closer together for $\ol$ and $\clg$ than for $\ol$ and $\cl$. For the remaining experiments, we only compare $\ol$ and $\clg$.

\subsection{Quad-Pendulum}
Consider a quadrotor with an attached pendulum ($n=8$, $m=2$) moving within an obstacle-cluttered 2D vertical plane, subject to viscous friction at the pendulum joint. The system is subject to operational constraints on the state and control input, as well as obstacle avoidance constraints. The task involves planning a trajectory starting at rest with the pendulum at the stable equilibrium, to a goal location, with the pendulum upright and both quadrotor and pendulum stationary. Table~\ref{tab:quadpole} provides the solver statistics for $\clg$ and $\ol$ (up until the algorithm stalls due to infeasibility of the QP sub-problem). Figure~\ref{fig:quadpole2} shows a timelapse of the solution for the more difficult of the two cases, highlighting the ability of $\clg$ in solving challenging planning tasks.

\begin{table}[H]
\centering
\resizebox{0.5\textwidth}{!}{\begin{tabular}{l | c | c c c c c}
\toprule
Method & Case & Converged & Iter & Obj & Viol & Time/it [s] \\
\midrule
\multirow{2}{*}{$\ol$} 
& 1 & \ding{55} & \texttt{stall} (2) & 2272.8 & -7.33  &  1.49\\
& 2 & \ding{55} & \texttt{stall} (4) & 488.9 & -0.58 & 1.35\\
\midrule
\multirow{3}{*}{$\clg$}
& 1 & \checkmark & 39 & 9.31 & -2.73e-10 & 6.25 \\
& 2 & \checkmark & 59 & 11.57 & -4.33e-10 & 5.65 \\
\bottomrule
\end{tabular}}
\caption{\footnotesize{Solver statistics for the Quad-Pendulum example. Algorithm $\ol$ stalls within a handful of iterations.}}
\label{tab:quadpole}\vspace{-0.9cm}
\end{table}.

\section{Conclusions}
In this work, we re-interpret DDP rollout policies from a perspective of sensitivity-based corrections, and use this insight to develop algorithms for computing analogous policies for constrained problems. We incorporate the resulting closed-loop rollouts within a shooting-based SQP framework, and demonstrate significant improvements in convergence speed over a standard SQP implementation using open-loop rollouts.

Our work opens several avenues for future research. First, a key bottleneck of SQP involves solving the QP sub-problem at each iteration to compute a ``nominal" perturbation sequence. This may be achieved with fast, but potentially, less-accurate unconstrained solvers (e.g., augmented-Lagrangian iLQR), that additionally compute the desired sensitivity gains using an efficient Riccati recursion. Second, leveraging recent results on differentiating through the solution of \emph{general} convex problems, the sensitivity-based computations may be applied to the sequential-\emph{convex}-programming algorithm. Finally, while the SQP algorithm was studied in the shooting context, recent work~\cite{MastalliEtAl2020} has demonstrated how to incorporate nonlinear rollouts with both states and controls as optimization variables, albeit in an otherwise unconstrained setting. The sensitivity-based gain computation can be extended to this setting, potentially boosting the efficiency of ``full" direct methods.

\bibliographystyle{IEEEtran}
\bibliography{main_ieee.bbl}

%!TEX root = main_ieee.tex

\newpage

\onecolumn

\appendices

\section{Related Work}
\label{app:related}
iLQR/DDP-like methods for constrained trajectory optimization fall into one of three main categories: control-bounds only~\cite{Tassa2014},\cite{MartiSolaEtAl2020},  modification of the backward pass via KKT analysis~\cite{GiftthalerBuchli2017,AoyamaBoutselis2020,XieLiu2017,Yakowitz1986,LantoineRussell2012}, and augmented Lagrangian methods~\cite{PlancherManchester2017,HowellJackson2019,SindhwaniRoelofs2017,MaCheng2020,PavlovShames2021}. 
In the first category, only control-bound constraints are considered. For instance,~\cite{Tassa2014} leverage the box-constrained DDP algorithm where the projected Newton algorithm is used to compute the affine perturbation policy in the backward pass, accounting for the box constraints on the control input. On the other hand, \cite{MartiSolaEtAl2020} composes a ``squashing" function to constrain the input, and augments the objective with a barrier penalty to encourage the iterates to stay away from the plateaued regions of squashing function.

More general constraints are considered in the second and third categories~\cite{GiftthalerBuchli2017,AoyamaBoutselis2020,XieLiu2017,Yakowitz1986,LantoineRussell2012, PlancherManchester2017,HowellJackson2019,SindhwaniRoelofs2017,MaCheng2020,PavlovShames2021}. The work in~\cite{GiftthalerBuchli2017} features equality constraints. The update step leverages constraint linearization and a nullspace projection to reduce the problem to a singular optimal control problem in an unconstrained control input lying within the nullspace of the linearized constraints. The works in~\cite{AoyamaBoutselis2020,XieLiu2017,Yakowitz1986,LantoineRussell2012} formulate a \emph{constrained} backward pass, where the one-step optimization problems now feature general state and control inequality constraints, linearized about the current trajectory iterate, yielding constrained one-step QPs. The algorithm in~\cite{XieLiu2017} extracts a guess of the active inequality constraints at each time-step by looking at the current trajectory iterate, and formulates the backward pass using linearized active constraints as equalities. The resulting KKT system is solved using Schur's complement and using the dual variable extracted from the ``feedforward" perturbation (i.e., the control perturbation computed assuming zero state perturbation) to yield an affine perturbation policy. The work in~\cite{Yakowitz1986} formulates the theoretical underpinnings of such a constrained backward pass by introducing the stage-wise KKT conditions, and implements a very similar algorithm to~\cite{XieLiu2017}, but where the active set is guessed to be equal to the active set at the feedforward solution, computed using linearized inequality constraints.~\cite{LantoineRussell2012} implements an identical backward pass, but with the active set guessed by extracting the violating constraints from implementing the feedforward perturbation (computed using a trust-region constraint). More recently,~\cite{AoyamaBoutselis2020} leverages a slack-form of the linearized inequality constraints in the one-step optimization problems and uses an iterative procedure to refine the dual variable assuming zero state perturbation. A final solve of the KKT system using Schur's complement yields the affine perturbation policy. The forward pass of all these methods discard this affine policy and instead use the one-step QPs to compute the control perturbation trajectory. In similar spirit,~\cite{PavlovShames2021} introduced an interior-point variation of the backward pass, where the one-step optimization is re-written in min-max form w.r.t. the Lagrangian, and linearization of the associated perturbed KKT conditions is used to compute the affine perturbation policy.

The algorithms in~\cite{PlancherManchester2017,HowellJackson2019,SindhwaniRoelofs2017,MaCheng2020} incorporate the constraints by forming the augmented Lagrangian, and alternate between unconstrained trajectory optimization (with the augmented Lagrangian as the objective), and updating the dual variables and penalty parameter -- loosely mimicking the method of multipliers~\cite{Bertsekas2016}.~\cite{PlancherManchester2017} implements this in combination with a sampling-based construction of the optimal cost-to-go approximation;~\cite{HowellJackson2019} dovetails the multipliers algorithm with Newton-based projections to project the intermediate solution onto the current active set; ~\cite{SindhwaniRoelofs2017} adopts an ADMM-based solution, by leveraging indicator function representations of the constraints and alternates between an unconstrained TV-LQR problem, linearized constraint projection, consensus update, and dual update.

In contrast to the methods above, our approach does not involve guessing active constraint sets for the one-step QPs and instead re-interprets the DDP-style control law from the lens of sensitivity analysis about a known feasible perturbation sequence. We use this interpretation to derive an augmented ``backward pass" where we additionally back-propagate a ``next-step" constraint set, along with the quadratic cost-to-go parameters. Further, we demonstrate how to approximate the desired feedback gains across all time-steps in parallel, dispelling the need for backward passes which are numerically less robust. Finally, we embed the closed-loop rollout within a theoretically sound SQP framework.

\section{SQP Preliminaries}
\label{app:sqp}

In this appendix, we outline the main steps of the SQP algorithm. These steps represent a simplified version of the commercial code NPSOL~\cite{GillMurray1986}, and use the termination conditions from the commercial code SNOPT~\cite{GillMurray2005}. Consider the following smooth optimization problem:
\begin{equation}
\begin{split}
    \Prob: \quad \min_{x \in \reals^n} \quad & f(x) \\
    \mathrm{s.t.} \quad & c(x) \in \reals^m_{\geq 0},
\end{split}
\label{generic-opt}
\end{equation}
where the functions $f$ and $c$ are at least $\mathcal{C}^2$. Define the Lagrangian as:
\[
    \mathcal{L}(x, y) = f(x) - y^T c(x).
\]
A standard SQP iteration consists of the following steps:
\begin{itemize}
    \item Solve a quadratic problem (QP) at current primal-dual point $(x, y)$:
    \[
    \begin{split}
        \delta x^* = \argmin_{\delta x} \quad & g(x)^T \delta x + \dfrac{1}{2} \delta x^T H(x, y) \delta x \\
        \mathrm{s.t.} \quad & c(x) + J(x) \delta x \geq 0.
    \end{split}
    \]
    where $g(x) = \nabla_x f(x)$, $H(x, y) = \nabla^2_{xx} \mathcal{L}(x, y)$, and $J(x) = \partial c(x)/\partial x$. Let $\hat{y}$ be the optimal dual variable for the inequality constraint above, and define the primal-dual search directions $(\delta x^*, \delta y^*)$, where $\delta y^* := \hat{y} - y$.
    
    \item Define the Merit function to be the augmented Lagrangian:
    \[
    \mathcal{M}(x, y, s; \rho) = f(x) - y^T (c(x) - s) + \dfrac{1}{2}\rho \|c(x) - s\|^2,
    \]
    where $s \in \reals^m_{\geq 0}$ is the inequality ``slack," defined only for the line-search, and $\rho$ is a non-negative penalty parameter. At the current iterate $(x, y)$ and penalty value $\rho$, set $s$ as follows:
    \[
        s = \begin{cases} \max(0, c(x)) \quad &\text{if} \ \rho = 0 \\
        \max\left( 0, c(x) - \dfrac{1}{\rho}y \right)\quad &\text{otherwise}
        \end{cases},
    \]
    where the $\max$ operator is component-wise, and set the slack search direction $\delta s^*$ as $c(x) + J(x) \delta x^* - s$.
    
    \item Define the line-search function $\phi(\alpha; \rho) := \mathcal{M}(x + \alpha \delta x^*, y + \alpha \delta y^*, s + \alpha \delta s^*; \rho)$ and pick the updated penalty parameter $\rho^+$ s.t. $\phi'(0; \rho^+) \leq -(1/2)\Delta^*$, where $\Delta^*$ is the decrement, defined as: $\delta x^{*^T} H(x, y) \delta x^*$. A simple update rule is given below~\cite{MurrayPrieto1995}: 
    \begin{equation}
    \rho^+ \leftarrow \begin{cases} \rho \quad &\text{if} \quad \phi'(0; \rho) \leq -\dfrac{1}{2} \Delta^* \\
    \max\left\{2\rho, \dfrac{g(x)^T \delta x + \frac{1}{2}\Delta^* + (2y - \hat{y})^T (c(x) - s)}{\|c(x) - s \|^2} \right\} \quad &\text{otherwise}.
    \end{cases}
    \label{rho_up}
    \end{equation}
    
    \item Compute (e.g., using backtracking, safe-guarded polynomial interpolation, etc.) the largest step length $\alpha \in [\underline{\alpha}, 1]$, where $\underline{\alpha}$ is a user-specified lower-bound, s.t. the following line-search conditions are satisfied:
    \begin{equation}
        \phi(\alpha; \rho^+) - \phi(0; \rho^+) \leq \sigma \alpha \phi'(0; \rho^+), \quad \text{and} \quad |\phi'(\alpha; \rho^+)| \leq -\eta \phi'(0; \rho^+),
        \label{ls_condn}
    \end{equation}
    where $0 < \sigma \leq \eta < \frac{1}{2}$. 
    
    \item Update $(x, y)$ using the computed step length, and set $\rho \leftarrow \rho^+$.
\end{itemize}
Note that this stripped-down version of SQP does not account for infeasibility detection, which involves slackened forms of the QP sub-problem.

The termination conditions are based upon specified relative tolerances $\tau_p, \tau_d \in \reals_{>0}$. Define $\tau_x := \tau_p (1 + \|x\|)$, and $\tau_y := \tau_d (1 + \|y\|)$. Then, convergence to a KKT stationary point is declared and the algorithm terminated if:
\begin{equation}
    \begin{cases}
    \min_i c_i(x) \geq -\tau_x \\
    \min_j y_j \geq -\tau_y \\
    \|c(x) \circ y\|_{\infty} \leq \tau_y \\
    \|g(x) - J(x)^T y \|_{\infty} \leq \tau_y
    \end{cases}
\label{sqp-term}
\end{equation}
where $\circ$ denotes the Hadamard product.

\section{Details for Shooting SQP}
\label{app:ocp}

We provide here the explicit expressions used within various stages of Shooting SQP. 

\subsection{QP Sub-Problem}

The linear term in~\eqref{qp_obj_implicit} and~\eqref{qp_obj_explicit} is given as follows:

\begin{equation}
\begin{split}
\langle\bm{\delta u}, \nabla_{\bm{u}} \J_R(\bm{u})\rangle &= \sum_{k=0}^{N-1} \nabla_x l_k^T \delta x_k + \nabla_u l_k^T \delta u_k + \nabla_x l_N ^T \delta x_N \\
&:= \sum_{k=0}^{N-1} q_k^T \delta x_k + r_k^T \delta u_k + q_N^T \delta x_N
\end{split}
\label{qp_obj_explicit_linear}
\end{equation}
where $\delta x_k$ is defined recursively using the linearized dynamics:
\begin{equation}
        \delta x_0 = 0, \ \ \delta x_{k+1} = A_k \delta x_k + B_k \delta u_k, \quad k=0,\ldots, N-1.
\end{equation}
The Hessian term in~\eqref{qp_obj_implicit} and~\eqref{qp_obj_explicit} is given by:
\begin{equation}
    \begin{split}
        \langle \bm{\delta u}, \nabla_{\bm{u}}^2 \mathcal{L}_R(\bm{u}, \bm{y}) \bm{\delta u} \rangle = &\sum_{k=0}^{N-1} \begin{bmatrix} \delta x_k \\ \delta u_k \end{bmatrix}^T \overbrace{\begin{bmatrix} \nabla_{xx}^2 \hat{H}_k & \nabla_{xu}^2 \hat{H}_k \\ * & \nabla_{uu}^2 \hat{H}_k \end{bmatrix}}^{:=Z_k} \begin{bmatrix} \delta x_k \\ \delta u_k \end{bmatrix} + \delta x_N ^T \overbrace{\nabla_{xx}^2 \hat{l}_N}^{:=Z_N} \delta x_N, \\
        &\qquad \qquad \text{where:} \\
        &\hat{l}_k(x_k, u_k, y_k) := l_k(x_k, u_k) - y_k^T c_k(x_k, u_k), \quad k=0,\ldots, N-1 \\ &\hat{l}_N(x_N, y_N) := l_N(x_N) - y_N^T c_N(x_N) \\
        &\hat{H}_k = \hat{l}_k(x_k, u_k, y_k) + \hat{\nu}_{k+1}^T f(x_k, u_k), \quad k = 0,\ldots, N-1\\
        &\begin{cases} \hat{\nu}_N = \nabla_x \hat{l}_N(x_N, y_N) \\
        \hat{\nu}_k = \nabla_x \hat{l}_k(x_k, u_k, y_k) + A_k^T \hat{\nu}_{k+1}, \  k=N-1,\ldots, 1
        \end{cases}
    \end{split}
    \label{qp_obj_implicit_hess}
\end{equation}

\begin{remark}
The matrices $Z_k$ above may not be positive semi-definite. To ensure problem~\eqref{qp_prob_implicit} is convex, we project $Z_k$ to the positive semi-definite cone with some $\epsilon > 0$ tolerance.
\end{remark}

\subsection{Line-Search}
\label{app:rho}
Define $s_k = (s_k^x, s_k^u)$ where $s_k^x$ and $s_k^u$ have the same dimensionality as $c_k^x$ and $c_k^u$ respectively. We use the augmented Lagrangian function as the merit function for line-search:
\begin{equation}
        \mathcal{M}_I(\bm{u}, \bm{y}, \bm{s}; \rho) = \sum_{k=0}^{N-1} l_k - y_k^T(c_k - s_k) + \dfrac{\rho_k}{2}\|c_k - s_k\|^2  + l_N - y_N^T(c_N - s_N) + \dfrac{\rho_N}{2}\|c_N - s_N\|^2,
\label{merit_fnc}
\end{equation}
where the dependence on $\bm{x}$ is implicit, i.e., $\bm{x} = \bm{x}[\bm{u}]$. The non-negative constants $\{\rho_k\}_{k=0}^{N}$ are a set of penalty parameters, which for brevity, we denote by $\rho$. The vectors $s_k$ represent a set of slack vectors, used only for the line-search. At the current iterate $(\bm{u}, \bm{x})$ and set of penalty parameters $\rho$, set $s_k$ as
\[
    s_k = \begin{cases} \max(0, c_k) \quad &\text{if} \ \rho_k = 0 \\
        \max(0, c_k - y_k/\rho) \quad &\text{otherwise}
        \end{cases},
\]
and set $\delta s_k^* = (\delta s_k^{x^*}, \delta s_k^{u^*})$, the slack-search directions, as:
\[
\begin{split}
    \delta s_k^{x^*} &= c_k^x(x_k) + J_k^x \delta x_k^* - s_k^x \\
    \delta s_k^{u^*} &= c_k^u(u_k) + J_k^u \delta u_k^* - s_k^u.
\end{split}
\]
As in Appendix~\ref{app:sqp}, we now must compute an updated set of penalty parameters $\rho^+$ so that $\phi'(0; \rho^+) \leq -(1/2) \langle \bm{\delta u}^*, \nabla_{\bm{u}}^2 \mathcal{L}_R(\bm{u}, \bm{y}) \bm{\delta u}^* \rangle := -(1/2) \Delta^*$. To derive this update, let us consider an abstract optimization problem of the form:
\[
\begin{split}
    \min_{x \in \reals^n} \quad & f(x) \\
    \mathrm{s.t.} \quad & c_k(x) \geq 0, \quad k = 0,\ldots, N,
\end{split}
\]
where $\{c_k: \reals^n \rightarrow \reals^{m_k}, k=0,\ldots, N\}$ is a set of vector-valued inequality constraints, with corresponding Jacobians $J_k(x) := \partial c_k(x)/\partial x$. Let $\bm{y} = (y_0,\ldots, y_N)$ be the stacked dual vector for the inequality constraints, $H(x, \bm{y})$ the Lagrangian Hessian, and $g(x)$ the objective gradient. The QP sub-problem is then defined as:
\[
    \begin{split}
        \delta x^* = \argmin_{\delta x \in \reals^n} \quad & \psi(\delta x) := g(x)^T \delta x + \dfrac{1}{2} \delta x^T H(x, \bm{y}) \delta x \\ \mathrm{s.t.} \quad & c_k(x) + J_k(x) \delta x \geq 0, \quad k = 0,\ldots, N.
    \end{split}
\]
Let $\hat{y}_k$ be the optimal dual vector for the QP-subproblem for constraint $k$ and define $\delta y_k^* := \hat{y}_k - y_k$. Consider the Augmented Lagrangian, $\mathcal{M}$, defined as:
\[
    \mathcal{M}(x, \bm{y}, \bm{s}; \{\rho_k\}_{k=0}^{N}) = f(x) - \sum_{k=0}^{N} y_k^T (c_k(x) - s_k) + \sum_{k=0}^{N} \dfrac{1}{2} \rho_k \|c_k(x) - s_k\|^2 \quad (s_k \geq 0),
\]
where $s_k \in \reals^{m_k}_{\geq 0}$ is the ``slack" for the $k^\mathrm{th}$ constraint, and $\bm{s}$ denotes the concatenatation $(s_0, \ldots, s_N)$. At the current iterate $(x, \bm{y})$ set the slack vectors $s_k$ as follows:
\[
    s_k = \begin{cases} \max(0, c_k) \quad &\text{if} \ \rho_k = 0 \\
        \max(0, c_k - y_k/\rho) \quad &\text{otherwise}
        \end{cases},
\]
and set the slack search directions $\delta s_k^* = c_k(x) + J_k(x) \delta x^* - s_k$. Define the line-search function $\phi(\alpha; \rho)$ as:
\[
    \phi(\alpha; \rho) := \mathcal{M}(x + \alpha \delta x^*, \bm{y} + \alpha \bm{\delta y}, \bm{s} + \alpha \bm{\delta s}; \rho).
\]
Now, we wish to choose the set of penalty parameters $\{\rho_k^+\}_{k=0}^{N}$ s.t. $\phi'(0; \rho^+) \leq -\dfrac{1}{2} \delta x^{*^T} H(x, \bm{y}) \delta x^* := -\dfrac{1}{2} \Delta^*$. Consider the gradient $\phi'(0; \rho^+)$:
\[
\begin{split}
    \phi'(0; \rho^+) &= g^T \delta x^* + \sum_{k=0}^{N}\left[ -\delta x^{*^T} J_k^T y_k + \rho_k^+ \delta x^{*^T} J_k^T (c_k - s_k) - (c_k - s_k)^T \delta y_k^* + y_k^T \delta s_k^* - \rho_k^+ \delta s_k^{*^T}(c_k - s_k) \right] \\
    &= g^T \delta x^* + \sum_{k=0}^{N} (2y_k - \hat{y}_k)^T (c_k - s_k) - \rho_k^+ \|c_k - s_k\|^2,
\end{split}
\]
where we have used the definitions of $\delta s_k^*$ and $\delta y_k^*$. Notice now that if $c_k - s_k = 0$ for all $k$, then since $s_k \geq 0$, $\delta x = 0$ is a feasible solution to the QP sub-problem. Thus, $\psi(\delta x^*) \leq \psi(0)$, from which one obtains the desired inequality. Consider then the case that the set $\mathcal{I} := \{k : \|c_k - s_k\| > 0\}$ is non-empty. Then, we can re-write $\phi'(0; \rho^+)$ as:
\[
    \sum_{k \in \mathcal{I}} \dfrac{1}{|\mathcal{I}|} g^T \delta x^* + (2y_k - \hat{y}_k)^T (c_k - s_k) - \rho_k^+ \|c_k - s_k\|^2
\]
Now, for $\rho_k^+ = \hat{\rho}_k, k \in \mathcal{I}$, defined as:
\[
    \hat{\rho}_k := \dfrac{\frac{1}{|\mathcal{I}|}\psi(\delta x^*) + (2y_k - \hat{y}_k)^T (c_k - s_k)}{\|c_k - s_k\|^2},
\]
we obtain:
\[
    \phi'\left(0; \{\hat{\rho}_k\}_{k\in \mathcal{I}} \cup \{\rho_k\}_{k \notin \mathcal{I}}\right) = -\dfrac{1}{2} \Delta^*.
\]
Thus, the update equation for $\{\rho_k\}$ may be written as:
\[
    \rho_k^+ = \begin{cases} \max(2 \rho_k, \hat{\rho}_k) \quad &\text{if} \quad k \in \mathcal{I} \\
    \rho_k \quad &\text{otherwise}.
    \end{cases}
\]
In context of Section~\ref{sec:line-implicit}, the term $\psi(\delta x^*)$ is equivalent to the optimal objective of problem~\eqref{qp_prob_implicit}, while the correspondences for the dual and slack vectors follows straightforwardly. The line-search acceptance conditions are as given in~\eqref{ls_condn}.

\subsection{Termination Conditions}
Let $\tau_p, \tau_d \in \reals_{>0}$ be user-specified relative tolerances, and define $\tau_x = \tau_p(1 + \|\bm{u}\|)$ and $\tau_y = \tau_d(1 + \|\bm{y}\|)$. Then, the KKT stationarity termination conditions are given as:
\begin{equation}
    \begin{split}
    &k = 0,\ldots, N: \begin{cases}
    c_k \geq -\tau_x \\
    y_k \geq -\tau_y \\
    \|c_k \circ y_k \|_{\infty} \leq \tau_y
    \end{cases} \\
    &k = 0,\ldots, N-1:  \|\nabla_{u_k} {\hat{H}}_k\|_{\infty} \leq \tau_y
    \end{split}
\label{shooting-term}    
\end{equation}
where $\circ$ denotes the Hadamard product, and $\hat{H}_k$, the Hamiltonian, is defined in~\eqref{qp_obj_implicit_hess}. 

\section{Computing Sensitivity Gains using DP}
\label{app:proof}

\begin{proof}[Proof of Theorem~\ref{thm:sens-gains}]
We will prove the result via induction. Notice that for the base case $k=N-1$, the cost-to-go function $\delta V_N(\delta x_N)$ is defined only for $\delta x_N \in \X_N$, which by construction, is the set $\CR_{N}$. Thus $\delta x_N^* \in \CR_N$ and $\delta \tilde{V}_N(\delta x_N) = \delta V_N(\delta x_N)$ for $\delta x_N \in \CR_{N}$. It follows that problem~\eqref{prob_k} for $k=N-1$ coincides exactly with the definition for $\delta \pi_{N-1}^*$ given in~\eqref{pi_pert}. Thus, $\delta \hat{\pi}^*_{N-1}(\delta x_{N-1}) = \delta \pi^*_{N-1}(\delta x_{N-1})$ for all $\delta x_{N-1} \in \delta X_{N-1}$ where the problem is feasible.

Now, notice that problem~\eqref{prob_k} for $k=N-1$ is a multi-parametric QP in the ``parameter" $\delta x_{N-1}$. Thus, by Theorem 2 in~\cite{BemporadMorari2002}, linear independence of the active inequality constraints at $\delta \hat{\pi}^*_{N-1}(\delta x_{N-1}^*)$ implies that the solution functions $\delta \hat{\pi}^*_{N-1}(\delta x_{N-1})$ and $\hat{y}^*_{N-1}(\delta x_{N-1})$ are locally\footnote{One can actually show that the solution function $\delta \hat{\pi}^*_{N-1}$ is in fact piecewise affine over $\delta \X_{N-1}$ however this is not necessary for the proof.} affine in a region containing $\delta x_{N-1}^*$, taking the form in~\eqref{sens_aff}. This region containing $\delta x_{N-1}^*$ is precisely the set:
\[
    \{\delta x_{N-1} \in \delta \X_{N-1}: \delta \hat{\pi}^*_{N-1,a}(\delta x_{N-1}) = \delta \hat{\pi}^*_{N-1}(\delta x_{N-1})\}.
\]
A second consequence of the aforementioned theorem is that the above set is defined by the intersection of the following two sets:
\[
\begin{split}
    &\{\delta x_{N-1} \in \delta \X_{N-1} : \bar{G}_{N-1} \begin{bmatrix} \delta \hat{\pi}^*_{N-1,a}(\delta x_{N-1}) \\ A_{N-1}\delta x_{N-1} + B_{N-1}\hat{\pi}^*_{N-1,a}(\delta x_{N-1}) \end{bmatrix} \leq \bar{h}_{N-1}\} \\
    &\{\delta x_{N-1} \in \delta \X_{N-1} : \hat{y}^*_{N-1,a}(\delta x_{N-1}) \geq 0\},
\end{split}
\]
that is, the subset of $\delta \X_{N-1}$ where the locally affine solutions satisfy the primal-dual feasibility constraints of problem~\eqref{prob_k} for $k=N-1$. This however is precisely the definition of the set $\CR_{N-1}$ in~\eqref{ctg_cons}. Thus, we have established that $\delta \hat{\pi}^*_{N-1,a}(\delta x_{N-1}) = \delta \pi^*_{N-1}(\delta x_{N-1})$ for $\delta x_{N-1} \in \CR_{N-1}$ and by construction, $\delta x_{N-1}^* \in \CR_{N-1}$. Substituting the affine feedback law into the objective for problem~\eqref{prob_k} for $k=N-1$ gives the recursion for $\{P_{N-1}, p_{N-1}, v_{N-1}\}$, as defined in~\eqref{ctg_recurse}. Thus, $\delta \tilde{V}_{N-1}(\delta x_{N-1}) = \delta V_{N-1}(\delta x_{N-1})$ for $\delta x_{N-1} \in \CR_{N-1}$, completing the proof for $k=N-1$.

Suppose then that the following statements are true for some $k+1 \leq N-1$: (i) $\delta x_{k+1}^* \in \CR_{k+1}$, and (ii) $\delta V_{k+1}(\delta x_{k+1}) = \delta \tilde{V}_{k+1}(\delta x_{k+1})$ for $\delta x_{k+1} \in \CR_{k+1}$. Consider the definition of $\delta \pi^*_k$ given in~\eqref{pi_pert}, and define:
\[
\mathrm{Pre}^*(\CR_{k+1}) := \{\delta x_{k} \in \delta \X_{k} : A_k \delta x_{k} + B_{k} \delta \pi^*_{k}(\delta x_{k}) \in \CR_{k+1}\}
\]
Since $\delta x_{k+1}^* \in \CR_{k+1}$ by the inductive hypothesis, it follows that $\delta x_{k}^* \in \mathrm{Pre}^*(\CR_{k+1})$ and hence, the set is non-empty. Now, for all $\delta x_{k} \in \mathrm{Pre}^*(\CR_{k+1})$, one may add the redundant constraint: $\delta x_{k+1} \in \CR_{k+1}$ and equivalently re-write $\delta \pi_{k}^*$ as:
\[
\begin{split}
    \argmin_{\delta u_{k} \in \delta \U_k} \quad &\tilde{l}_{k}(\delta x_{k}, \delta u_{k}) + \delta V_{k+1}(\delta x_{k+1}) \\
    \mathrm{s.t.} \quad & \delta x_{k+1} \in \CR_{k+1}.
\end{split}
\]
Now, leveraging the inductive hypothesis that $\delta \tilde{V}_{k+1}(\delta x_{k+1}) = \delta V_{k+1}(\delta x_{k+1})$ for all $\delta x_{k+1} \in \CR_{k+1}$, we can re-write $\delta \pi_{k}^*$ as:
\[
\begin{split}
    \argmin_{\delta u_{k} \in \delta \U_k} \quad &\tilde{l}_{k}(\delta x_{k}, \delta u_{k}) + \delta \tilde{V}_{k+1}(\delta x_{k+1}) \\
    \mathrm{s.t.} \quad & \delta x_{k+1} \in \CR_{k+1}.
\end{split}
\]
Comparing with problem~\eqref{prob_k}, this is precisely the definition of $\delta \hat{\pi}^*_{k}$. Thus, we have that $\delta \hat{\pi}^*_{k}(\delta x_{k}) = \delta \pi^*_{k}(\delta x_{k})$ for $\delta x_{k} \in \mathrm{Pre}^*(\CR_{k+1})$.

Now, consider the fixed affine feedback law $\delta \hat{\pi}^*_{k,a}$ in~\eqref{sens_aff}, that is well-defined thanks to the linear independence assumption at $\delta x_k^*$. Once again, applying Theorem 2 in~\cite{BemporadMorari2002}, we have that $\delta \hat{\pi}^*_k(\delta x_k) = \delta \hat{\pi}^*_{k,a}(\delta x_k)$ for $\delta x_k \in \CR_k$, as defined in~\eqref{ctg_cons}. Substituting the affine law into the objective of problem~\eqref{prob_k} yields the recursion in~\eqref{ctg_recurse}; thus, $\delta \tilde{V}_k(\delta x_k) = \delta V_k(\delta x_k)$ for all $\delta x_k \in \CR_k \cap \mathrm{Pre}^*(\CR_{k+1})$. Further, by construction, $\delta x_k^* \in \CR_k$; thus, the intersection $\CR_k \cap \mathrm{Pre}^*(\CR_{k+1})$ is non-empty.

To finish the proof, we must show that $\CR_{k} \subseteq \mathrm{Pre}^*(\CR_{k+1})$. By doing so, we establish the equivalency between the fixed affine feedback law $ \delta \hat{\pi}^*_{k,a}$ and $\delta \pi^*_{k}$ and consequently, the equivalency of $\delta V_{k}$ and $\delta \tilde{V}_{k}$ for $\delta x_{k} \in \CR_{k}$.

To show this last part, notice that for any $\delta x_k$ that lies on the boundary $\partial \mathrm{Pre}^*(\CR_{k+1})$, it holds that $A_k \delta x_k + B_k \delta \pi_k^*(\delta x_k)$ lies on the boundary $\partial \CR_{k+1}$. This follows from the continuity of $\delta\pi_k^*(\delta x_k)$. Then, by the equivalence of $\delta \pi^*_k$ and $\delta \hat{\pi}_k^*$ for $\delta x_k \in \mathrm{Pre}^*(\CR_{k+1})$ and the equivalence of $\delta \hat{\pi}^*_k$ and $\delta \hat{\pi}^*_{k,a}$ for $\delta x_k \in \CR_k$, for any $\delta x_k \in \CR_k \cap \partial \mathrm{Pre}^*(\CR_{k+1})$, it holds that $A_k \delta x_k + B_k \delta\hat{\pi}_{k,a}^*(\delta x_k) \in \partial \CR_{k+1}$. It follows that $\delta x_k \in \partial \CR_k$.

Thus, since the intersection $\CR_k \cap \mathrm{Pre}^*(\CR_{k+1})$ is non-empty, then either (i) $\CR_k \subset \mathrm{Pre}^*(\CR_{k+1})$, or (ii) $\CR_k \cap \partial \mathrm{Pre}^*(\CR_{k+1})$ is non-empty. In the latter case, we have shown that is the set $\partial \CR_k \cap \partial \mathrm{Pre}^*(\CR_{k+1})$, completing the proof.

\end{proof}

\section{Experiment Details}
\label{app:numerics}

The following hyperparameters were held fixed for all Shooting SQP variations over all environments:
\begin{table}[H]
\centering
\begin{tabular}{l | c}
\toprule
Parameter & Value \\
\midrule
Line-search decrease ratio: $\sigma$ & 0.4 \\ 
Line-search gradient ratio: $\eta$ & 0.49 \\
Line-search step-size lower-bound: $\underline{\alpha}$ & 1e-5 \\
Termination Primal tolerance: $\tau_p$ & 1e-3 \\ 
Termination Dual tolerance: $\tau_d$ & 1e-3* \\
\bottomrule
\end{tabular}
\caption{Hyperparameters for Shooting SQP. (*) For the Quad-Pendulum example, the dual tolerance $\tau_d$ was set as $10^{-2}$.}
\label{tab:sqp_hyper}
\end{table}
In the following, we give details specific to each environment case-study.

\subsection{Motion Planning for a Car}
This is a system with 4 states: $x = (p_x, p_y, \theta, v)$, where $(p_x, p_y) \in \reals^2$ is the 2D position, $\theta \in \mathcal{S}^1$ is the orientation, and $v \in \reals$ is the forward velocity. The continuous-time dynamics are given by:
\[
    \dot{p}_x = v \sin \theta, \quad \dot{p}_y = v \cos \theta, \quad \dot{\theta} = v u^{\theta}, \quad \dot{v} = u^v,
\]
where $(u^{\theta}, u^v)$ are the steering and acceleration inputs. As in~\cite{XieLiu2017}, we use Euler integration with a time-step of $0.05$s to yield the discrete-time model. The control limits are taken from~\cite{AoyamaBoutselis2020}: $u^{\theta} \in [-\frac{\pi}{3}, \frac{\pi}{3}]$ rad/s, and $u^{v} \in [-6, 6]\ \mathrm{s}^{-2}$.

The stage cost is $l_k(x, u) = 0.05 u^T R u$ with $R = \mathrm{diag}(0.2, 0.1)$, and terminal cost is $l_N(x) = (x - x_g)^T Q_N (x - x_g)$ with $Q_N = \mathrm{diag}(50, 50, 50, 10)$ and $x_g = (3, 3, \frac{\pi}{2}, 0)$. The horizon $N$ is 40 steps. The initial control sequence guess was all zeros. The constant $\gamma$ for $\clg$ was set to $10^{-4}$.

Figure~\ref{fig:car-steps} shows the solver progress for all algorithms and cases.

\begin{figure}[H]
  \centering
  \includegraphics[width=0.5\textwidth]{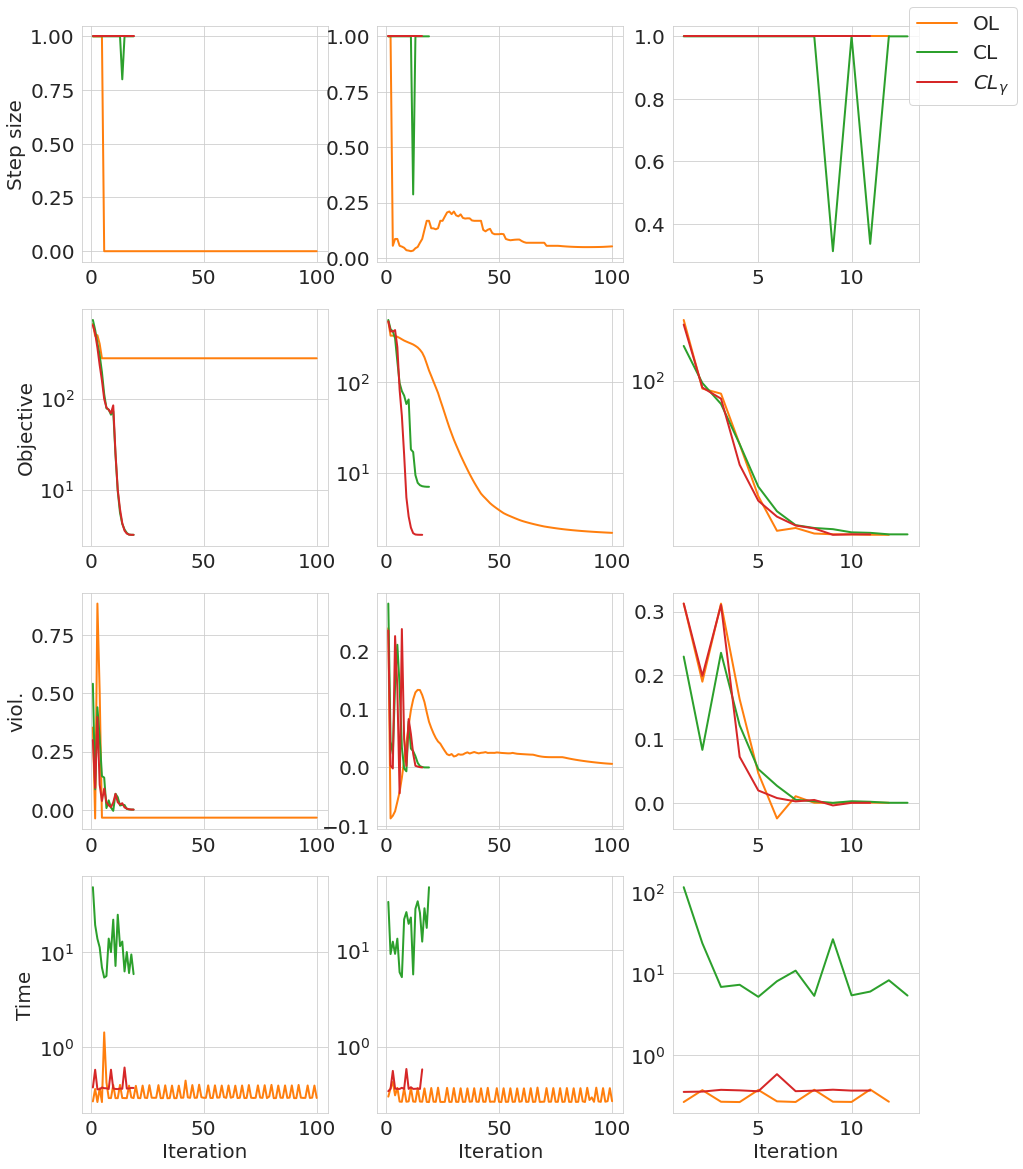}  
\caption{Step-size, objective, constraint violation (positive indicates strict feasibility), and computation time per iteration, as a function of iteration. \emph{Left-to-right}: Case index.}
\label{fig:car-steps}
\end{figure}

\subsection{Acrobot}

We study the classic acrobot ``swing-up" task with $n=4$ states and $m=1$ control input. The state is defined as $x = (q_1, q_2, v_1, v_2)$ where $q_1$ the root joint angle, $q_2$ is the relative elbow joint angle, and $v_1, v_2$ are their respective angular rates. The dynamics were taken from~\cite{Tedrake2009}. Starting from an initial state $x_0 = 0$, we require the acrobot to swing up to the goal state $x_g = (\pi, 0, 0, 0)$, subject to the control limits $u \in [-2, 2]$. The stage cost is $l_k(x, u) = \frac{1}{2}\left[w_1(\cos q_1 + \cos(q_1 + q_2) + 2) + w_2 u^2\right]$, and the terminal cost is $l_N(x) = \frac{1}{2} w_3 \|x - x_g\|^2$, with weights $(w_1, w_2, w_3) = (0.1, 0.01, 10)$. Additionally, we enforce the limit constraints\footnote{Due to the fact that these two state components really live on $\mathcal{S}^1 \times \mathcal{S}^1$, we employ chart switching to smoothly handle this constraint near the boundary $-\pi/\pi$. This is achieved by adjusting the boundary constraints for each angle to lie in $[-\pi \pm \delta_k, \pi \pm \delta_k]$ for the next iterate, where the per time-step adjustment $\delta_k$ is chosen based on the current values for $q_1$ and $q_2$ at time-step $k$. We also wrap both angles back to the interval $[-\pi, \pi]$ at the end of each iteration.} $(q_1, q_2) \in [-\pi, \pi]^2$ and the terminal constraint: $\|x_N - x_g\|^2 \leq 0.2^2$. The discrete-time dynamics were obtained using explicit Euler integration with a time-step of 0.05s, and the horizon $N$ was $150$ steps.

To generate the initial guess, we set $\bm{\chi}_0$, the initial guess for the state-trajectory to be a linear interpolation between $x_0$ and $x_g$, and $\bm{\mu}_0$, the initial guess for the controls as all zeros. Then, the following rollout was used to generate the initial control trajectory for SQP:
\[
    x_{k+1} = f(x_k, \mathrm{clip}_{\ulb}^{\uub} [\mu_k + K_k^{\mathrm{lqr}} (x_k - \chi_k)]),\ x_0 = 0,\ k=0,\ldots, N-1,
\]
where the gains $\{K_k^{\mathrm{lqr}}\}$ were computed via a TV-LQR solve, with cost defined by the Hessian of the objective and dynamics linearized about $(\bm{\chi}_0, \bm{\mu}_0)$.

We found that for this problem, the \emph{Gauss-Newton} Hessian approximation was more stable for both algorithms. This entails dropping the second-order gradients stemming from $\hat{\nu}_{k+1}^T f(x_k, u_k)$ in~\eqref{qp_obj_implicit_hess}. The constant $\gamma$ for $\clg$ was set to $10^{-4}$. Figure~\ref{fig:acro-steps} illustrates solver progress.
\begin{figure}[H]
  \centering
  \resizebox{0.5\textwidth}{!}{\includegraphics[width=.5\textwidth]{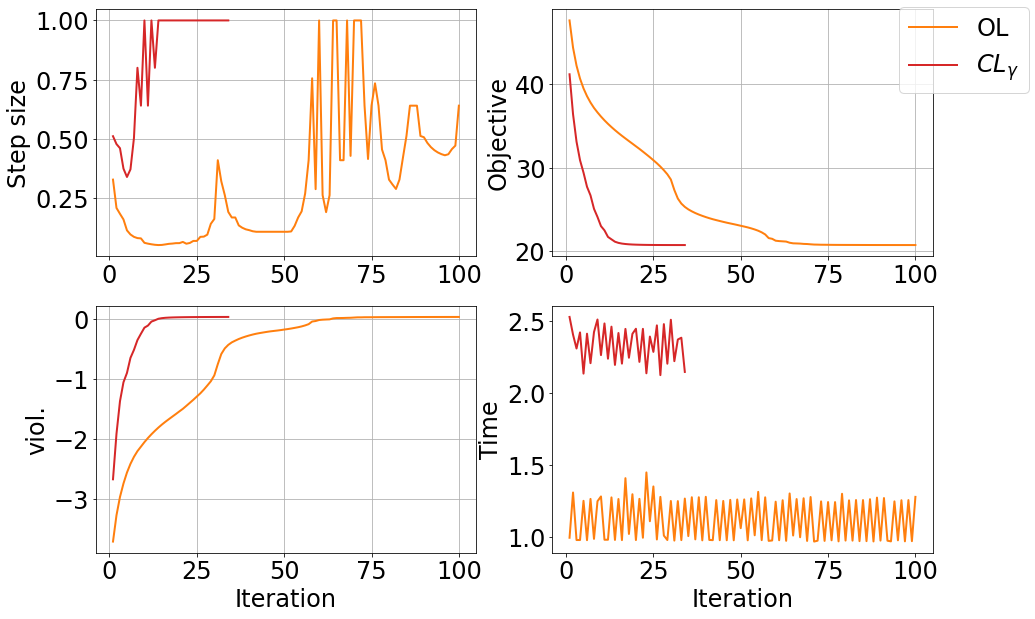}}
\caption{Step-size, objective, constraint violation (positive indicates strict feasibility), and computation time per iteration.}
\label{fig:acro-steps}
\end{figure}
Despite sub-optimal step-sizes for both algorithms (due to the natural instabilities of the dynamics), $\clg$ successfully converges within 33 iterations while $\ol$ is still struggling to resolve the Lagrangian stationarity residual at 100 iterations.

\subsection{Quad-Pendulum}

The state is defined as $x = (p_x, p_z, \theta, \phi, \dot{p}_x, \dot{p}_z, \dot{\theta}, \dot{\phi})$, where $(p_x, p_z) \in \reals^2$ is the CoM position of the quadrotor in the vertical plane, $\theta \in \mathcal{S}^1$ is its orientation, and $\phi \in \mathcal{S}^1$ is the orientation of the attached pendulum w.r.t. the vertical. The two control inputs $u = (u_1, u_2)$ corresponding to the rotor thrusts are subject to the limits $[0.1, 3] m_q g$, where $m_q$ is the mass of the quadrotor and $g$ is the gravitational acceleration.

We provide the Lagrangian characterization of the dynamics. The mass matrix $M(q)$, as a function of the generalized coordinates $q = (p_x, p_z, \theta, \phi)$ is given by:
\[
    \begin{bmatrix} m_q + m_p & 0 & 0 & m_p L \cos\phi \\
                    0 & m_q + m_p & 0 & m_p L \sin\phi \\
                    0 & 0 & J & 0 \\
                    m_p L \cos\phi & m_p L \sin\phi & 0 & m_p L^2
    \end{bmatrix},
\]
where $m_p$ is the mass of the pendulum (assumed concentrated at the endpoint), $L$ is the pendulum length, and $J$ is the quadrotor moment of inertia. The kinetic energy is thus: $T(q, \dot{q}) = \frac{1}{2} \dot{q}^T M(q) \dot{q}$; the potential energy is $V(q) = m_q g p_z + m_p g (p_z - L \cos\phi)$, and the mechanical Lagrangian is $\mathcal{L}(q, \dot{q}) = T(q, \dot{q}) - V(q)$. The dynamics are thus given as:
\[
    \dfrac{d}{dt}\left(\dfrac{\partial \mathcal{L}(q, \dot{q})}{\partial \dot{q}}\right) - \dfrac{\partial \mathcal{L}(q, \dot{q})}{\partial q} = F(q),
\]
where $F(q)$, the generalized force vector is given as:
\[
    F(q) = \begin{bmatrix} -(u_1 + u_2) \sin \theta \\ (u_1 + u_2) \cos \theta \\ (u_1 - u_2)l - \tau_f \\ \tau_f \end{bmatrix},
\]
where $2l$ is the quadrotor wing-span, $\tau_f$ is the viscous frictional torque: $-\nu(\dot{\phi} - \dot{\theta})$, where $\nu$ is the constant friction coefficient. We used the constants: $m_q = 0.486$, $m_p = 0.2m_q$, $l=0.25$, $L=2l$, $g=9.81$, $J=0.00383$, $\nu=0.01$, and employed Euler discretization with time-step 0.025 s.

The stage cost is $l_k(x, u) = \frac{1}{2} (w_1 (\|(p_x, p_z, \theta) - (g_x, g_z, g_\theta)\|^2 + (1 + \cos\phi)) + w_2 \|u - u_h\|^2)$, and the terminal cost is $l_N(x) = \frac{1}{2} (x - x_g)^T Q_N (x - x_g)$, where $x_g = (g_x, g_z, g_{\theta}, g_{\phi}, 0, 0, 0, 0)$ is the goal state with: $(g_x, g_z, g_\theta, g_\phi) = (3, -1.5, 0, \pi)$. The vector $u_h = 0.5(m_q + m_p)g \bm{1}$ is the hover thrust setpoint, and $Q_N = \mathrm{diag}(10, 10, 1, 1, 1, 1, 1, 1)$. The weights are $(w_1, w_2, w_3) = (0.01, 0.05, 5)$. The horizon $N$ was 160 steps.

In addition to the obstacle avoidance constraints, we also set the operational constraints $(p_x, p_z) \in [-4, 4] \times [-2, 2]$ and $\theta \in [-\frac{3\pi}{4}, \frac{3\pi}{4}]$. The initial control sequence guess was set to $u_h$, the hover setpoint, for all timesteps.

Due to the difficulty of this problem, the constant $\gamma$ for $\clg$ was initialized at $10^{-3}$ and reduced by a factor of 10 every SQP iteration, until a lower-bound of $10^{-5}$.

Figure~\ref{fig:quadpole1} shows a timelapse for the first case solution from $\clg$, and in Figure~\ref{fig:quadpole_err} we plot the re-construction error $\|\delta \hat{\pi}_k^*(\delta x_k^*) - \delta u_k^*\|$ for both cases for $\clg$.

\begin{figure}[H]
  \centering
  \includegraphics[width=0.7\textwidth]{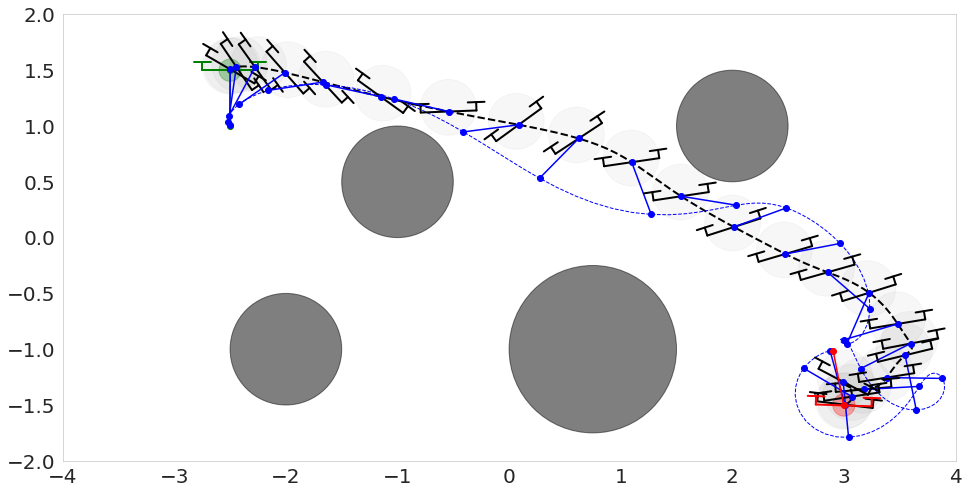}  
\caption{Timelapse of the solution obtained by $\clg$ for Case \#1. Robot snapshots are 0.1s apart at the beginning and expand to 0.3s towards the end; initial render is in green, final render is in red. Light shaded circle centered on the quadrotor is the circumscribing disc used for the collision-avoidance constraint for the quadrotor body (in addition to the avoidance constraint w.r.t. the pendulum pole).}
\label{fig:quadpole1}
\end{figure}

\begin{figure}[H]
  \centering
  \includegraphics[width=0.7\textwidth]{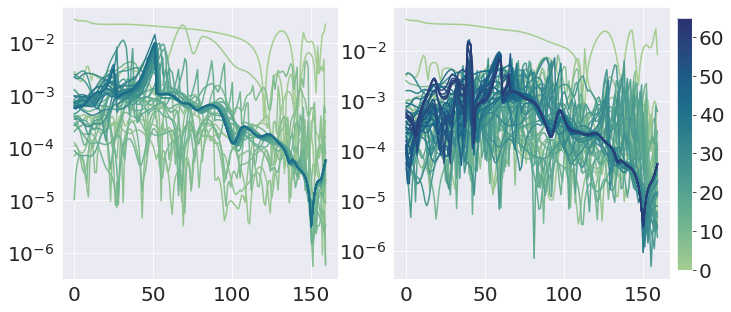}  
\caption{Re-construction error between open-loop solution $\bm{\delta u}^*$ and perturbation policy solution $\delta \hat{\pi}_k^*(\delta x_k^*)$ as a function of time-step across all SQP iterations. \emph{Left}: Case \#1; \emph{Right}: Case \#2. Colorbar identifies the SQP iteration.}
\label{fig:quadpole_err}
\end{figure}

\end{document}